\begin{document}

\newtheorem{theorem}[subsection]{Theorem}
\newtheorem{proposition}[subsection]{Proposition}
\newtheorem{lemma}[subsection]{Lemma}
\newtheorem{corollary}[subsection]{Corollary}
\newtheorem{conjecture}[subsection]{Conjecture}
\newtheorem{prop}[subsection]{Proposition}
\newtheorem{defin}[subsection]{Definition}

\numberwithin{equation}{section}
\newcommand{\mr}{\ensuremath{\mathbb R}}
\newcommand{\mc}{\ensuremath{\mathbb C}}
\newcommand{\dif}{\mathrm{d}}
\newcommand{\intz}{\mathbb{Z}}
\newcommand{\ratq}{\mathbb{Q}}
\newcommand{\natn}{\mathbb{N}}
\newcommand{\comc}{\mathbb{C}}
\newcommand{\rear}{\mathbb{R}}
\newcommand{\prip}{\mathbb{P}}
\newcommand{\uph}{\mathbb{H}}
\newcommand{\fief}{\mathbb{F}}
\newcommand{\majorarc}{\mathfrak{M}}
\newcommand{\minorarc}{\mathfrak{m}}
\newcommand{\sings}{\mathfrak{S}}
\newcommand{\fA}{\ensuremath{\mathfrak A}}
\newcommand{\mn}{\ensuremath{\mathbb N}}
\newcommand{\mq}{\ensuremath{\mathbb Q}}
\newcommand{\half}{\tfrac{1}{2}}
\newcommand{\f}{f\times \chi}
\newcommand{\summ}{\mathop{{\sum}^{\star}}}
\newcommand{\chiq}{\chi \bmod q}
\newcommand{\chidb}{\chi \bmod db}
\newcommand{\chid}{\chi \bmod d}
\newcommand{\sym}{\text{sym}^2}
\newcommand{\hhalf}{\tfrac{1}{2}}
\newcommand{\sumstar}{\sideset{}{^*}\sum}
\newcommand{\sumprime}{\sideset{}{'}\sum}
\newcommand{\sumprimeprime}{\sideset{}{''}\sum}
\newcommand{\sumflat}{\sideset{}{^\flat}\sum}
\newcommand{\shortmod}{\ensuremath{\negthickspace \negthickspace \negthickspace \pmod}}
\newcommand{\V}{V\left(\frac{nm}{q^2}\right)}
\newcommand{\sumi}{\mathop{{\sum}^{\dagger}}}
\newcommand{\mz}{\ensuremath{\mathbb Z}}
\newcommand{\leg}[2]{\left(\frac{#1}{#2}\right)}
\newcommand{\muK}{\mu_{\omega}}
\newcommand{\thalf}{\tfrac12}
\newcommand{\lp}{\left(}
\newcommand{\rp}{\right)}
\newcommand{\Lam}{\Lambda_{[i]}}
\newcommand{\lam}{\lambda}
\newcommand{\af}{\mathfrak{a}}
\newcommand{\sw}{S_{[i]}(X,Y;\Phi,\Psi)}
\newcommand{\lz}{\left(}
\newcommand{\pz}{\right)}
\newcommand{\bfrac}[2]{\lz\frac{#1}{#2}\pz}
\newcommand{\odd}{\mathrm{\ primary}}
\newcommand{\even}{\text{ even}}
\newcommand{\res}{\mathrm{Res}}
\newcommand{\sumn}{\sumstar_{(c,1+i)=1}  w\left( \frac {N(c)}X \right)}
\newcommand{\lab}{\left|}
\newcommand{\rab}{\right|}
\newcommand{\Go}{\Gamma_{o}}
\newcommand{\Ge}{\Gamma_{e}}
\newcommand{\M}{\widehat}

\theoremstyle{plain}
\newtheorem{conj}{Conjecture}
\newtheorem{remark}[subsection]{Remark}

\makeatletter
\def\widebreve{\mathpalette\wide@breve}
\def\wide@breve#1#2{\sbox\z@{$#1#2$}%
     \mathop{\vbox{\m@th\ialign{##\crcr
\kern0.08em\brevefill#1{0.8\wd\z@}\crcr\noalign{\nointerlineskip}%
                    $\hss#1#2\hss$\crcr}}}\limits}
\def\brevefill#1#2{$\m@th\sbox\tw@{$#1($}%
  \hss\resizebox{#2}{\wd\tw@}{\rotatebox[origin=c]{90}{\upshape(}}\hss$}
\makeatletter

\title[Ratios conjecture for quadratic twist of modular $L$-functions ]{Ratios conjecture for quadratic twist of modular $L$-functions }

\author[P. Gao]{Peng Gao}
\address{School of Mathematical Sciences, Beihang University, Beijing 100191, China}
\email{penggao@buaa.edu.cn}

\author[L. Zhao]{Liangyi Zhao}
\address{School of Mathematics and Statistics, University of New South Wales, Sydney NSW 2052, Australia}
\email{l.zhao@unsw.edu.au}

\begin{abstract}
 We develop the $L$-functions ratios conjecture with one shift in the numerator
and denominator in certain ranges for the family of quadratic twist of modular $L$-functions using multiple
Dirichlet series under the generalized Riemann
hypothesis.
\end{abstract}

\maketitle

\noindent {\bf Mathematics Subject Classification (2010)}: 11M06, 11M41  \newline

\noindent {\bf Keywords}:  ratios conjecture, quadratic twist, modular $L$-functions

\section{Introduction}\label{sec 1}

  As a significant conjecture with many important applications, the $L$-functions ratios conjecture makes predictions on the asymptotic behaviors of the sum of ratios of products of shifted $L$-functions.  This conjecture originated from the work of D. W. Farmer \cite{Farmer93} on shifted moments of the Riemann zeta function, and is formulated for general $L$-functions by J. B. Conrey, D. W. Farmer and M. R. Zirnbauer in \cite[Section 5]{CFZ}. \newline

 There are now several results available in the literature on the ratios conjecture, all valid for certain ranges of the relevant parameters, starting with the work of H. M. Bui, A. Florea and J. P. Keating in \cite{BFK21} over function fields for quadratic $L$-functions. Utilizing the powerful tool of multiple Dirichlet series, M. \v Cech \cite{Cech1} studied the case of quadratic Dirichlet $L$-functions under the assumption of the generalized Riemann hypothesis (GRH).  This was the first result of its type over number fields. \newline

Following the approach of M. \v Cech in \cite{Cech1}, the authors studied the ratios conjecture for quadratic Hecke $L$-functions over the Gaussian field $\mq(i)$ in \cite{G&Zhao14}.  It is the aim of this paper to further apply the method of multiple Dirichlet series to develope the ratios conjecture for quadratic twists of modular $L$-functions. To state our result, we fix a holomorphic Hecke eigenform $f$ of weight $\kappa $ of the full modular group $SL_2 (\mathbb{Z})$.   The Fourier expansion of $f$ at infinity can be written as
\begin{align*}
f(z) = \sum_{n=1}^{\infty} \lambda_f (n) n^{(\kappa -1)/2} e(nz), \quad \mbox{where} \quad e(z)= \exp(2 \pi i z) .
\end{align*}

 We reserve the letter $p$ for a prime throughout the paper. For any Dirichlet character $\chi$ modulo $d$, the twisted modular $L$-function $L(s, f \otimes \chi_d)$ is defined for $\Re(s)>1$ by
\begin{align}
\label{fdfunctioneqn}
L(s, f \otimes \chi) &= \sum_{n=1}^{\infty} \frac{\lambda_f(n)\chi(n)}{n^s}
 = \prod_{p\nmid d} \left(1 - \frac{\lambda_f (p) \chi(p)}{p^s}  + \frac{1}{p^{2s}}\right)^{-1}.
\end{align}
We also write $L(s, \sym f)$ for the symmetric square $L$-function of $f$ defined in \eqref{Lsymexp} and note that $L(s, \sym f)$ is holomorphic for $\Re(s) \geq 1/2$ (see the discussions given in Section \ref{sec2.4}). \newline

Let $\chi^{(m)}=\leg {m}{\cdot}$ denote the Kronecker symbol defined on \cite[p. 52]{iwakow} for any integer $m \equiv 0, 1 \pmod 4$.  Note that every such $m$ factors uniquely into $m=dl^2$, where $d$ is a fundamental discriminant, i.e. $d$ is square-free, $d \equiv 1 \pmod 4$ or $d=4n$ with $n$ square-free, $n \equiv 2,3 \pmod 4$.  Furthermore, when $d$ is a fundamental discriminant, the function $L(s, f \otimes \chi^{(d)})$ has an analytical continuation to the entire complex plane and satisfies the functional equation (see, for example, \cite{S&Y})
\begin{align}
\label{equ:FEfd}
\Lambda (s, f \otimes \chi^{(d)}) =: \left(\frac{|d|}{2\pi} \right)^s \Gamma \left( s + \frac{\kappa -1}{2} \right) L(s, f \otimes \chi^{(d)})
= i^\kappa \epsilon(d ) \Lambda (1- s, f \otimes \chi^{(d)}),
\end{align}
where $\epsilon(d) = 1$ if $d >0$ and $\epsilon(d) = -1$ if $d<0$. \newline

 For an odd, positive integer $n$,  we denote $\chi_n$ for the quadratic character $\left(\frac {\cdot}{n} \right)$.  Given any $L$-function, we write $L^{(c)}$ (resp. $L_{(c)}$) for the function given by the Euler product defining $L$ but omitting those primes dividing (resp. not dividing) $c$. We also write $L_p$ for $L_{(p)}$. We observe that the quadratic reciprocity law implies that $L^{(2)}(s, f \otimes \chi_n)=L(s, f \otimes \chi^{(4n)})$ when $n \equiv 1 \pmod 4$ and that $L^{(2)}(s, f \otimes \chi_n)=L(s, f \otimes \chi^{(-4n)})$ when $n \equiv -1 \pmod 4$. It follows from \eqref{fdfunctioneqn}, \eqref{equ:FEfd} and the above discussions that $L^{(2)}(s, f \otimes \chi_n)$ can also be continued analytically to the entirety of $\comc$.\newline

Our result in this paper investigates the ratios conjecture with one shift in the numerator and denominator for the family of quadratic twist of modular $L$-functions $L^{(2)}(s, f \otimes \chi_n)$ averaged over all odd, positive $n$.
\begin{theorem} \label{Theorem for all characters}
With the notation as above and the truth of GRH, let $w(t)$ be a non-negative Schwartz function and $\widehat w(s)$ its Mellin transform.  For any $\varepsilon>0$, $1/2>\Re(\alpha)>0$, $\Re(\beta)>\varepsilon$, we have
\begin{align}
\label{Asymptotic for ratios of all characters}
\begin{split}	
\sum_{\substack{(n,2)=1}}\frac{L^{(2)}(\tfrac{1}{2}+\alpha, f \otimes \chi_{n})}{L^{(2)}(\tfrac{1}{2}+\beta, f \otimes \chi_{n})}w \bfrac {n}X =&  X\M w(1)L^{(2)}(1+2\alpha, \sym f)P(1,\tfrac{1}{2}+\alpha,\tfrac{1}{2}+\beta;f)+O\lz(1+|\alpha|)^{\varepsilon}|\beta|^\varepsilon X^{N(\alpha,\beta)+\varepsilon}\pz,
\end{split}
\end{align}
 where
\begin{align}
\label{Pdef}
\begin{split}
		P(s, w,z;f)=& \lz 1-\frac 1{2^s} \pz \frac {1}{\zeta^{(2)}(2w)}\\
& \times \prod_{p>2}\lz   1+\frac 1{p^{2z}}  \lz 1-\frac 1{p^s} \pz \lz 1+\frac 1{p^{2w}} \pz +\lz 1-\frac 1{p^s} \pz \frac 1{p^{2w}}-\frac {
\lambda^2_f(p)-2}{p^{2w+s}}+\frac 1{p^{4w+s}}  -\lz1-\frac1{p^s}\pz \frac {\lambda^2_f(p)}{p^{z+w}} \pz,
\end{split}
\end{align}
   and
\begin{equation}
\label{Nab}
			N(\alpha,\beta)=\max\left\{1-2\Re(\alpha),1-2\Re(\beta) \right\}.
\end{equation}
\end{theorem}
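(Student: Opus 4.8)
The plan is to realise the left-hand side of \eqref{Asymptotic for ratios of all characters} as a contour integral of a multiple Dirichlet series and to read off the main term from a single simple pole. Using $w\bfrac{n}{X}=\frac{1}{2\pi i}\int_{(c)}\M w(s)(X/n)^{s}\,\dif s$ for $c$ large, the sum there equals $\frac{1}{2\pi i}\int_{(c)}\M w(s)X^{s}Z(s;\alpha,\beta)\,\dif s$, where $Z(s;\alpha,\beta):=\sum_{(n,2)=1}\bigl(L^{(2)}(\thalf+\alpha,f\otimes\chi_n)/L^{(2)}(\thalf+\beta,f\otimes\chi_n)\bigr)n^{-s}$ converges absolutely for $\Re(s)$ large, since under GRH the ratio is $\ll_{\varepsilon}(n(1+|\alpha|)(1+|\beta|))^{\varepsilon}$ in the stated ranges. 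The whole problem then reduces to continuing $Z(s;\alpha,\beta)$ meromorphically to $\Re(s)>N(\alpha,\beta)$, locating its poles there, and bounding it polynomially on vertical lines; the theorem follows on moving the contour to $\Re(s)=N(\alpha,\beta)+\varepsilon$ and collecting the residue at $s=1$.

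To analyse $Z$, I would expand the denominator as $L^{(2)}(\thalf+\beta,f\otimes\chi_n)^{-1}=\sum_{(m,2)=1}\mu_f(m)\chi_n(m)m^{-1/2-\beta}$, where $\mu_f$ is the multiplicative function with $\sum_m\mu_f(m)m^{-s}=1/L(s,f)$ (legitimate under GRH for $\Re(\beta)>0$), and likewise write $L^{(2)}(\thalf+\alpha,f\otimes\chi_n)=\sum_{(k,2)=1}\lambda_f(k)\chi_n(k)k^{-1/2-\alpha}$, so that $Z$ is, formally, a triple sum of $\lambda_f(k)\mu_f(m)\chi_n(km)k^{-1/2-\alpha}m^{-1/2-\beta}n^{-s}$ over $k,m,n$ coprime to $2$. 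For fixed $k,m$, write $km=ab^{2}$ with $a$ squarefree; then the inner $n$-sum is a finite Euler product (over $2$ and the primes dividing $b$) times the Dirichlet $L$-function of the character $n\mapsto\leg{a}{n}$, which by quadratic reciprocity is a Dirichlet character modulo $8a$. This $L$-function is entire unless $a=1$, i.e.\ unless $km$ is a perfect square, in which case it has a simple pole at $s=1$; this isolates the diagonal term $Z_{\mathrm{main}}(s;\alpha,\beta)$ supported on $km=\square$, which carries the only pole of $Z$ in $\Re(s)>N(\alpha,\beta)$, namely at $s=1$, plus an entire remainder $Z_{\mathrm{rest}}$. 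Non-squarefree odd $n$ are handled at the same stage, $\chi_n$ depending only on the squarefree kernel of $n$; they only modify the finite Euler factors.

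For the residue at $s=1$ I would compute $Z_{\mathrm{main}}$ as an Euler product. Its local factor at an odd prime $p$ is $\sum_{j+\ell\text{ even},\ \ell\le 2}\lambda_f(p^{j})\mu_f(p^{\ell})p^{-j(1/2+\alpha)-\ell(1/2+\beta)}$, with each term having $(j,\ell)\neq(0,0)$ multiplied by the residual factor $1-p^{-1}$ (the local part of $\res_{s=1}$ of the $n$-sum), while $p=2$ contributes $1-2^{-1}$. Using the generating-function identities $\sum_{j\ge 0}\lambda_f(p^{2j})p^{-ju}=(1-p^{-2u})L_p(u,\sym f)$ and $\sum_{j\ge 1,\ j\text{ odd}}\lambda_f(p^{j})p^{-j(1/2+\alpha)}=\lambda_f(p)p^{-1/2-\alpha}(1-p^{-1-2\alpha})L_p(1+2\alpha,\sym f)$ (both consequences of $\sum_{j\ge 0}\lambda_f(p^{j})x^{j}=(1-\lambda_f(p)x+x^{2})^{-1}$), these local sums collapse; pulling out $L_p(1+2\alpha,\sym f)$ and the factor $1-p^{-1-2\alpha}$ — whose product over $p>2$ is $1/\zeta^{(2)}(1+2\alpha)$ — one is left with precisely the local factors appearing in \eqref{Pdef}. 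Hence $\res_{s=1}Z(s;\alpha,\beta)=L^{(2)}(1+2\alpha,\sym f)P(1,\thalf+\alpha,\thalf+\beta;f)$, and the residue of the Mellin integrand at $s=1$ is the asserted main term $X\M w(1)L^{(2)}(1+2\alpha,\sym f)P(1,\thalf+\alpha,\thalf+\beta;f)$.

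The main obstacle is the continuation of $Z_{\mathrm{rest}}$ to $\Re(s)>N(\alpha,\beta)$ together with the vertical-line bounds, since naive termwise GRH bounds on the Dirichlet $L$-values leave the $k$- and $m$-sums divergent. To get past this I would keep $L^{(2)}(\thalf+\alpha,f\otimes\chi_n)$ unexpanded and use its functional equation \eqref{equ:FEfd}: because $f\otimes\chi^{(\pm 4n)}$ has conductor $\asymp n^{2}$, the functional equation trades $\thalf+\alpha\mapsto\thalf-\alpha$ at the cost of a factor of size $n^{-2\alpha}$ and Gamma-quotients, which relates $Z_{\mathrm{rest}}(s;\alpha,\beta)$ to a dual series convergent for $\Re(s)>1-2\Re(\alpha)$; a parallel treatment of the $\beta$-aspect — controlling, under GRH, the near-diagonal contributions of the $\mu_f$-series and their interaction with the $n$-sum — extends the region to $\Re(s)>1-2\Re(\beta)$, so that $Z_{\mathrm{rest}}$ is holomorphic and of polynomial growth on $\Re(s)>N(\alpha,\beta)$. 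Throughout, GRH is used to bound the ambient Dirichlet $L$-values and the reciprocals $L^{(2)}(\thalf+\beta,f\otimes\chi_n)^{-1}$ by $(n(1+|\alpha|)(1+|\beta|))^{\varepsilon}$; tracking these factors yields the $(1+|\alpha|)^{\varepsilon}|\beta|^{\varepsilon}$ in the error. Shifting the contour to $\Re(s)=N(\alpha,\beta)+\varepsilon$ then gives the main term from the residue at $s=1$ and the shifted integral as $O\bigl((1+|\alpha|)^{\varepsilon}|\beta|^{\varepsilon}X^{N(\alpha,\beta)+\varepsilon}\bigr)$. This continuation step, as in \cite{Cech1,G&Zhao14}, is the heart of the argument.
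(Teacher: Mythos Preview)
Your overall framework is correct and matches the paper: Mellin inversion, the diagonal $km=\square$ terms carrying the only pole at $s=1$, and your residue computation via the generating-function identities for $\lambda_f(p^{j})$ is exactly the calculation in Section~\ref{sec:resA}. The genuine gap is the continuation of $Z_{\mathrm{rest}}$.

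The functional equation you invoke, \eqref{equ:FEfd}, acts in the $w$-variable (i.e.\ in $\alpha$), not in $s$. After applying it you pick up a factor $n^{-2\alpha}$ and a dual value $L(\tfrac12-\alpha,f\otimes\chi^{(\pm 4n)})$; but since $\Re(\tfrac12-\alpha)<\tfrac12$, the GRH bound on that value (via \eqref{Lfchibound1}, or simply by applying \eqref{equ:FEfd} back) is $\ll n^{2\Re(\alpha)+\varepsilon}$, which exactly cancels the gain from $n^{-2\alpha}$. So your ``dual series'' converges only for $\Re(s)>1$, not for $\Re(s)>1-2\Re(\alpha)$ as you claim; re-expanding $L(\tfrac12-\alpha,\cdot)$ as a Dirichlet series does not help either, since that series diverges for $\Re(\alpha)>-\tfrac12$. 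The $\beta$-aspect is worse: there is no functional equation for $1/L$, so your ``parallel treatment'' has no apparent implementation. Finally, \eqref{equ:FEfd} is only stated for primitive twists, and you do not explain how the correction factors for non-square-free $n$ are absorbed.

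The paper's continuation is carried out by a different functional equation. Writing $A(s,w,z;f)=\sum_{(mk,2)=1}\lambda_f(m)c_f(k)L(s,\chi^{(4mk)})m^{-w}k^{-z}$ (your $Z$ is $A(s,\tfrac12+\alpha,\tfrac12+\beta;f)$), the paper applies to the non-square part $mk\neq\square$ the functional equation of the \emph{Dirichlet} $L$-function $L(s,\chi^{(4mk)})$ in the $s$-variable. The crucial input is Lemma~\ref{Functional equation with Gauss sums} (from \cite{Cech1}), valid for \emph{all} Dirichlet characters modulo $4mk$, not just primitive ones; it introduces the Gauss sums $\tau(\chi^{(4mk)},q)$ and transforms $A_2$ into a new triple series $C(1-s,s+w,s+z;f)$. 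The Euler product of $C$ is analysed in Lemma~\ref{Estimate For D(w,t)}, yielding a second region of holomorphy $S_4$ with $\Re(s)<0$; Bochner's tube theorem (Theorem~\ref{Bochner}) then extends $(s-1)A$ to the convex hull $S_5$, which contains $\{\Re(s)>N(\alpha,\beta)\}$ at $(w,z)=(\tfrac12+\alpha,\tfrac12+\beta)$, and Proposition~\ref{Extending inequalities} propagates the vertical-strip bounds. Because this functional equation is in $s$ and treats $m$ and $k$ (hence $\alpha$ and $\beta$) on an equal footing, both thresholds $1-2\Re(\alpha)$ and $1-2\Re(\beta)$ emerge simultaneously; this symmetric mechanism is what your proposal is missing.
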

	
It follows from our discussions in Section \ref{sec:resA} below that the value $P(1,\tfrac{1}{2}+\alpha,\tfrac{1}{2}+\beta;f)$ is finite for the ranges of $\alpha$ and $\beta$ defined in the statement of Theorem \ref{Theorem for all characters}. The main term in \eqref{Asymptotic for ratios of all characters} is consistent with the ratios conjecture, which can be derived following the treatments given in \cite[Section 5]{G&Zhao2022-2}. However, the error term in \eqref{Asymptotic for ratios of all characters} is inferior to the prediction of the ratios conjecture.  The latter asserts that \eqref{Asymptotic for ratios of all characters} holds uniformly for $|\Re(\alpha)|< 1/4$, $(\log X)^{-1} \ll \Re(\beta) < 1/4$ and $\Im(\alpha), \; \Im(\beta) \ll X^{1-\varepsilon}$ with an error term $O(X^{1/2+\varepsilon})$.  Nevertheless, the advantage of our result in \eqref{Asymptotic for ratios of all characters} is that there is no constraint on imaginary parts of $\alpha$ or $\beta$. \newline

  Theorem \ref{Theorem for all characters} will be established using the method in the proof of \cite[Theorem 1.2]{Cech1}. In particular, we note that the functional equation of a general (not necessarily primitive) quadratic Dirichlet $L$-function given in \cite[Proposition 2.3]{Cech1} plays a crucial role in our proof.

\section{Preliminaries}
\label{sec 2}

We first include some auxiliary results.

\subsection{Quadratic Gauss sums}
\label{sec2.4}
    Recall that $\chi^{(d)}=\leg {d}{\cdot} $ for the Kronecker symbol.  Moreover, let $\psi_j, j \in \{ \pm 1, \pm 2\}$ denote the quadratic characters given by $\psi_j=\chi^{(4j)}$. Note that $\psi_j$ is a primitive character modulo $4j$ for each $j$.  We also denote $\psi_0$ the primitive principle character. \newline
	
  For any integer $q$ and any Dirichlet character $\chi$ modulo $n$, we define the associated Gauss sum $\tau(\chi,q)$ by
\begin{equation*}
		\tau(\chi,q)=\sum_{j\pmod n}\chi(j)e \lz \frac {jq}n \pz.
\end{equation*}

The following result is quoted from \cite[Lemma 2.2]{Cech1}.
\begin{lemma}
\label{Lemma changing Gauss sums}
\begin{enumerate}
\item If $l\equiv1 \pmod 4$, then
\begin{equation*}
		\tau\lz\chi^{(4l)},q\pz=
\begin{cases}
					0,&\hbox{if $(q,2)=1$,}\\
					-2\tau\lz\chi_l,q\pz,&\hbox{if $q\equiv2 \pmod 4$,}\\
					2\tau\lz \chi_l ,q\pz,&\hbox{if $q\equiv0 \pmod 4$.}
\end{cases}
\end{equation*}
			\item If $l \equiv3 \pmod 4$, then
\begin{equation*}
				\tau\lz\chi^{(4l)},q\pz=\begin{cases}
					0,&\hbox{if $2|q$,}\\
					-2i\tau\lz\chi_l,q\pz,&\hbox{if $q\equiv1 \pmod 4$,}\\
					2i\tau\lz\chi_l,q\pz,&\hbox{if $q\equiv3 \pmod 4$.}
				\end{cases}
\end{equation*}
		\end{enumerate}
\end{lemma}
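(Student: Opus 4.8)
The plan is to peel off the ``$2$-part'' of the modulus: write $\chi^{(4l)}$ as a product of $\chi_l$ with an explicit character modulo $4$, and then split the sum defining $\tau(\chi^{(4l)},q)$ over residues modulo $4l$ into its components modulo $4$ and modulo $l$ via the Chinese Remainder Theorem. The modulo-$l$ component will reproduce $\tau(\chi_l,q)$, while the modulo-$4$ component will be a sum of at most two roots of unity that one evaluates directly in each residue class of $q$ modulo $4$.

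First I would record the pointwise factorization of the characters. For odd $l\ge 1$ and any integer $n$, multiplicativity of the Kronecker symbol in its upper entry gives $\chi^{(4l)}(n)=\leg{4}{n}\leg{l}{n}$, and $\leg{4}{n}=\leg{2}{n}^2$ equals $1$ for odd $n$ and $0$ for even $n$; hence $\chi^{(4l)}$ is supported on odd integers, on which it equals $\leg{l}{n}$. Quadratic reciprocity for the Jacobi symbol then yields, for odd $n$,
\[
\chi^{(4l)}(n)=\leg{l}{n}=\chi_l(n)\,(-1)^{\frac{l-1}{2}\cdot\frac{n-1}{2}}=
\begin{cases}\chi_l(n),& l\equiv 1 \pmod 4,\\ \chi_l(n)\,\psi_{-1}(n),& l\equiv 3 \pmod 4,\end{cases}
\]
since $(-1)^{(l-1)/2}$ is $1$ or $-1$ according as $l\equiv1$ or $3\pmod 4$ and $(-1)^{(n-1)/2}=\psi_{-1}(n)$ for odd $n$, while both sides vanish for even $n$. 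Equivalently, $\chi^{(4l)}=\chi_l\cdot\psi$, where $\psi$ is the principal character modulo $4$ when $l\equiv1\pmod 4$ and $\psi=\psi_{-1}$ when $l\equiv3\pmod 4$.

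Since $\gcd(4,l)=1$, I would then parametrise $j\bmod 4l$ by the pair $(a,b)$ with $a\equiv j\pmod 4$ and $b\equiv j\pmod l$, so that $\tfrac{j}{4l}\equiv\tfrac{a\bar l}{4}+\tfrac{b\bar4}{l}\pmod 1$ with $\bar l\equiv l^{-1}\pmod 4$ and $\bar4\equiv 4^{-1}\pmod l$. Because $\chi_l$ has modulus $l$ and $\psi$ has modulus $4$, this factors the Gauss sum as
\[
\tau\!\left(\chi^{(4l)},q\right)=\Biggl(\sum_{a\in\{1,3\}}\psi(a)\,e\!\left(\frac{a\bar l q}{4}\right)\Biggr)\Biggl(\sum_{b\bmod l}\chi_l(b)\,e\!\left(\frac{b\bar4 q}{l}\right)\Biggr).
\]
The second factor equals $\tau(\chi_l,\bar4 q)$, and the substitution $b\mapsto 4b$ together with $\chi_l(4)=\chi_l(2)^2=1$ identifies it with $\tau(\chi_l,q)$ --- this uses only $\gcd(4,l)=1$, not primitivity of $\chi_l$. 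For the first factor, $\bar l\equiv1\pmod 4$ when $l\equiv1\pmod 4$ and $\bar l\equiv3\pmod 4$ when $l\equiv3\pmod 4$, so $e(a\bar l q/4)$ depends only on $aq\bmod 4$; evaluating over the four classes of $q$ modulo $4$ gives, for $l\equiv1\pmod 4$ (where $\psi(1)=\psi(3)=1$), the values $0$, $-2$, $2$ according as $q$ is odd, $q\equiv2\pmod 4$, $q\equiv0\pmod 4$, and for $l\equiv3\pmod 4$ (where $\psi(1)=1$, $\psi(3)=-1$), the values $0$ for $q$ even, $-2i$ for $q\equiv1\pmod 4$, and $2i$ for $q\equiv3\pmod 4$. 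Multiplying the two factors gives exactly the two cases of the lemma.

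I expect the only genuine work to be sign bookkeeping: tracking the reciprocity factor $(-1)^{\frac{l-1}{2}\frac{n-1}{2}}$, the value of $l^{-1}\bmod 4$, and the evaluation $e(a\bar l q/4)\in\{\pm1,\pm i\}$ in each residue class. As none of these steps requires the quadratic symbols involved to be primitive, the argument applies verbatim to the (generally imprimitive) characters $\chi^{(4l)}$ and $\chi_l$ appearing here.
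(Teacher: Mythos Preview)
Your argument is correct. The factorisation $\chi^{(4l)}=\chi_l\cdot\psi$ with $\psi$ the principal character modulo $4$ (for $l\equiv 1\pmod 4$) or $\psi=\psi_{-1}$ (for $l\equiv 3\pmod 4$) is exactly right, the CRT splitting of the Gauss sum is carried out cleanly, and the case-by-case evaluation of the modulo-$4$ factor checks out in every residue class.

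As for comparison with the paper: there is nothing to compare. The paper does not prove this lemma; it merely quotes it from \v Cech~\cite{Cech1} (Lemma~2.2 there). Your self-contained argument via quadratic reciprocity and CRT is the standard route and is presumably what \v Cech does as well, but in any case you have supplied a complete proof where the present paper offers only a citation.
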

	
	We further define $G\lz\chi_n,q\pz$ by
\begin{align*}
\begin{split}
			G\lz\chi_n,q\pz&=\lz\frac{1-i}{2}+\leg{-1}{n}\frac{1+i}{2}\pz\tau\lz\chi_n,q\pz=\begin{cases}
				\tau\lz\chi_n,q\pz,&\hbox{if $n\equiv1 \pmod 4$,}\\
				-i\tau\lz\chi_n,q\pz,&\hbox{if $n\equiv3\pmod 4$}.
			\end{cases}
\end{split}
\end{align*}
	
We denote $\varphi(m)$ for the Euler totient function of $m$. Our next result is taken from \cite[Lemma 2.3]{sound1} and evaluates $G\lz\chi_m,q\pz$.
\begin{lemma}
\label{lem:Gauss}
   If $(m,n)=1$ then $G(\chi_{mn},q)=G(\chi_m,q)G(\chi_n,q)$. Suppose that $p^a$ is
   the largest power of $p$ dividing $q$ (put $a=\infty$ if $m=0$).
   Then for $k \geq 0$ we have
\begin{equation*}
		G\lz\chi_{p^k},q\pz=\begin{cases}\varphi(p^k),&\hbox{if $k\leq a$, $k$ even,}\\
			0,&\hbox{if $k\leq a$, $k$ odd,}\\
			-p^a,&\hbox{if $k=a+1$, $k$ even,}\\
			\leg{qp^{-a}}{p}p^{a}\sqrt p,&\hbox{if $k=a+1$, $k$ odd,}\\
			0,&\hbox{if $k\geq a+2$}.
		\end{cases}
\end{equation*}
\end{lemma}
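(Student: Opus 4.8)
The plan is to first establish the multiplicativity of the map $n\mapsto G(\chi_n,q)$ and then to reduce the prime-power evaluation to a Ramanujan sum when the exponent is even and to the classical quadratic Gauss sum modulo $p$ when it is odd; the normalizing factor $\tfrac{1-i}{2}+\leg{-1}{n}\tfrac{1+i}{2}$ in the definition of $G(\chi_n,q)$ is present precisely to absorb the reciprocity sign that appears along the way.

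For multiplicativity, write $n=mn'$ with $(m,n')=1$ and use the Chinese Remainder Theorem to parametrise the residues $j\bmod{mn'}$ as $j\equiv \overline{n'}n'a+\overline{m}m\,b\pmod{mn'}$, with $a$ running over residues $\bmod{m}$ and $b$ over residues $\bmod{n'}$, where $\overline{n'}n'\equiv1\pmod m$ and $\overline{m}m\equiv1\pmod{n'}$. Since $\leg{\cdot}{m}$ and $\leg{\cdot}{n'}$ are periodic modulo $m$ and $n'$ respectively, $\leg{j}{mn'}=\leg{j}{m}\leg{j}{n'}$, and $e(jq/(mn'))=e(\overline{n'}aq/m)\,e(\overline{m}bq/n')$, one gets $\tau(\chi_{mn'},q)=\tau(\chi_m,\overline{n'}q)\,\tau(\chi_{n'},\overline{m}q)$ for every $q$. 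The substitution $a\mapsto\overline{u}a$, which is a bijection $\bmod{m}$ whenever $(u,m)=1$, shows that $\tau(\chi_m,uq)=\leg{u}{m}\tau(\chi_m,q)$ for all $q$; using this and its analogue modulo $n'$ gives $\tau(\chi_{mn'},q)=\leg{n'}{m}\leg{m}{n'}\,\tau(\chi_m,q)\,\tau(\chi_{n'},q)$. Quadratic reciprocity for the Jacobi symbol turns $\leg{n'}{m}\leg{m}{n'}$ into $(-1)^{\frac{m-1}{2}\frac{n'-1}{2}}$, and a short check of the cases $m\equiv n'\equiv1$, exactly one of $m,n'\equiv3$, and $m\equiv n'\equiv3\pmod4$ verifies that $\bigl(\tfrac{1-i}{2}+\leg{-1}{mn'}\tfrac{1+i}{2}\bigr)(-1)^{\frac{m-1}{2}\frac{n'-1}{2}}=\bigl(\tfrac{1-i}{2}+\leg{-1}{m}\tfrac{1+i}{2}\bigr)\bigl(\tfrac{1-i}{2}+\leg{-1}{n'}\tfrac{1+i}{2}\bigr)$. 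Multiplying $\tau(\chi_{mn'},q)$ by the left-hand normalizing factor and regrouping then yields $G(\chi_{mn'},q)=G(\chi_m,q)G(\chi_{n'},q)$.

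For the prime-power values, set $p^a\|q$ (so $a=\infty$ when $q=0$) and split on the parity of $k$. If $k$ is even then $\leg{-1}{p^k}=1$, so the normalizing factor equals $1$ and $\leg{j}{p^k}$ is the principal character modulo $p$; hence $\tau(\chi_{p^k},q)$ is the Ramanujan sum $\sum e(jq/p^k)$ over $j\bmod{p^k}$ with $p\nmid j$, which equals $\varphi(p^k)$ when $p^k\mid q$, equals $-p^{k-1}$ when $p^{k-1}\|q$, and vanishes otherwise; in terms of $a$ this is $\varphi(p^k)$ for $k\le a$, $-p^a$ for $k=a+1$ and $0$ for $k\ge a+2$, matching the three even-exponent lines. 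If $k$ is odd, write $j=v+pt$ with $v\bmod{p}$ and $t\bmod{p^{k-1}}$, so that $\leg{j}{p^k}=\leg{v}{p}$ and $e(jq/p^k)=e(vq/p^k)\,e(tq/p^{k-1})$; the sum over $t$ equals $p^{k-1}$ if $p^{k-1}\mid q$ and $0$ otherwise, which already forces $\tau(\chi_{p^k},q)=0$, hence $G(\chi_{p^k},q)=0$, when $a\le k-2$. When $a\ge k$ we have $e(vq/p^k)=1$, so the sum over $v$ is $\sum_{v\bmod p}\leg{v}{p}=0$ and again $G=0$. When $a=k-1$, write $q=p^a q'$ with $q'=qp^{-a}$ and $p\nmid q'$; then $e(vq/p^k)=e(vq'/p)$ and $\sum_{v\bmod p}\leg{v}{p}e(vq'/p)=\leg{q'}{p}g_p$, where by Gauss's classical evaluation $g_p=\sqrt p$ if $p\equiv1\pmod4$ and $g_p=i\sqrt p$ if $p\equiv3\pmod4$. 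Since the normalizing factor equals $1$ for $p\equiv1\pmod4$ and $-i$ for $p\equiv3\pmod4$, in both cases one finds $G(\chi_{p^k},q)=\leg{q'}{p}p^{k-1}\sqrt p=\leg{qp^{-a}}{p}p^a\sqrt p$, which is the remaining line.

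The one point that requires genuine care is the bookkeeping in the multiplicativity step, namely tracking the Jacobi reciprocity sign and verifying that the factor $\tfrac{1-i}{2}+\leg{-1}{n}\tfrac{1+i}{2}$ cancels it for every residue pattern of $n\bmod4$; the rest is a routine Chinese Remainder Theorem computation together with the textbook evaluation of the quadratic Gauss sum modulo a prime. Since the statement is quoted verbatim from \cite[Lemma 2.3]{sound1}, one may simply cite it, but the argument sketched above is the one we would otherwise reproduce.
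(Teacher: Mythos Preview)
Your argument is correct. The paper does not prove this lemma at all: it simply quotes the result from \cite[Lemma~2.3]{sound1} and moves on. You have supplied a full, self-contained proof where the paper gives none, and you have correctly identified this yourself in your final paragraph.

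For the record, each step checks out. The Chinese Remainder parametrisation gives $\tau(\chi_{mn'},q)=\tau(\chi_m,\overline{n'}q)\,\tau(\chi_{n'},\overline{m}q)$; the twisting identity $\tau(\chi_m,uq)=\leg{u}{m}\tau(\chi_m,q)$ for $(u,m)=1$ holds for all $q$ since the change of variables $a\mapsto \overline{u}a$ is a bijection on $\mathbb{Z}/m\mathbb{Z}$ regardless of $q$; and your case check of the four residue patterns modulo $4$ is exactly the mechanism by which the normalizing factor absorbs the reciprocity sign $(-1)^{\frac{m-1}{2}\frac{n'-1}{2}}$. In the prime-power part, the reduction of even $k$ to the Ramanujan sum $c_{p^k}(q)$ and of odd $k$ to $p^{k-1}$ times the classical Gauss sum $\sum_{v\bmod p}\leg{v}{p}e(vq'/p)$ is standard and correctly executed; the normalizing factor converts $g_p\in\{\sqrt p,\,i\sqrt p\}$ into $\sqrt p$ in both cases, as you note. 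One tiny wording quibble: for even $k$ the symbol $\leg{\cdot}{p^k}$ is the principal character modulo $p^k$ (not modulo $p$), though this does not affect your Ramanujan-sum computation.
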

	
\subsection{Modular $L$-functions}
\label{sec:cusp form}

For any Dirichlet character $\chi$, the twisted modular $L$-function $L(s, f \otimes \chi)$ has an Euler product for $\Re(s)>1$ given by
\begin{align}
\label{Lfdef}
L(s, f \otimes \chi) &=
\prod_{p} \prod^2_{j =1} (1-\alpha_{f}(p,j)\chi(p)p^{-s})^{-1}.
\end{align}

  By Deligne's proof \cite{D} of the Weil conjecture, we know that
\begin{align}
\label{alpha}
|\alpha_{f}(p,1)|=|\alpha_{f}(p,2)|=1 \quad \mbox{and} \quad \alpha_{f}(p,1)\alpha_{f}(p,2)=1.
\end{align}

  It follows from  this and \eqref{fdfunctioneqn} that $\lambda_f(n)$ is multiplicative and for $\nu \geq 1$, we have
\begin{align}
\label{lambdafpnu}
\begin{split}
 \lambda_{f}(p^{\nu})=& \sum^{\nu}_{j=0}\alpha^{\nu-j}_f(p,1)\alpha^j_{f}(p,2).
\end{split}
\end{align}
  The above relation further implies that $\lambda_f (n) \in \mr$, $\lambda_f (1) =1$ and
\begin{align}
\label{lambdabound}
\begin{split}
  |\lambda_{f}(n)| \leq d(n) \ll n^{\varepsilon},
\end{split}
\end{align}
 where $d(n)$ is the number of divisors of $n$ and the last estimate above follows from \cite[Theorem 2.11]{MVa1}. \newline

We also deduce from \eqref{Lfdef} that for $\Re(s)>1$,
\begin{align}
\begin{split}
\label{Linverse}
 L^{-1}(s, f \otimes \chi)=\prod_p\prod^2_{j=1}(1-\alpha_{f}(p, j)\chi(p)p^{-s})=:\sum^{\infty}_{n=1}\frac {c_{f}(n)\chi(n)}{n^s}.
\end{split}
\end{align}

  It follows from \eqref{alpha}, \eqref{lambdafpnu} and the Euler product given in \eqref{Linverse} that $c_{f}(n)$ is a multiplicative function of $n$ such that
\begin{align}
\begin{split}
\label{aexp}
  c_{f}(p^k)=
\displaystyle
\begin{cases}
 1, \quad k=0, 2, \\
 -\lambda_f(p), \quad k=1, \\
   0, \quad  k>2.
\end{cases}
\end{split}
\end{align}

  In particular, we deduce from \eqref{alpha} and \eqref{aexp} that we have $|c_{f}(p^k)| \leq 2$ for all $k \geq 0$, so that for any
$\varepsilon>0$,
\begin{align}
\label{abound}
\begin{split}
  c_{f}(n) \ll 2^{\omega(n)} \ll n^{\varepsilon},
\end{split}
\end{align}
  where $\omega(n)$ denotes the number of distinct primes dividing $n$ and the last estimation above follows from the well-known bound (see \cite[Theorem 2.10]{MVa1})
\begin{align}
\label{omegabound}
   \omega(h) \ll \frac {\log h}{\log \log h}, \; \mbox{for} \; h \geq 3.
\end{align}

  Recall also that the symmetric square $L$-function $L(s, \operatorname{sym}^2 f)$ of $f$ is defined for $\Re(s)>1$ by
 (see \cite[p. 137]{iwakow} and \cite[(25.73)]{iwakow})
\begin{align}
\label{Lsymexp}
\begin{split}
 L(s, \operatorname{sym}^2 f)=& \prod_p\prod_{1 \leq i \leq j \leq 2} \lz 1-\frac{\alpha_{f}(p, i)\alpha_{f}(p, j)}{p^s} \pz^{-1}=\prod_p \lz 1-\frac{\alpha^2_{f}(p, 1)}{p^s} \pz^{-1} \lz 1-\frac{1}{p^s} \pz^{-1} \lz 1-\frac{\alpha^2_{f}(p, 2)}{p^s} \pz^{-1} \\
    =& \zeta(2s) \sum_{n = 1}^{\infty} \frac {\lambda_f(n^2)}{n^s}=\prod_{p} \left( 1-\frac {\lambda_f(p^2)}{p^s}+\frac {\lambda_f(p^2)}{p^{2s}}-\frac {1}{p^{3s}} \right)^{-1}.
\end{split}
\end{align}

  It follows from a result of G. Shimura \cite{Shimura} that the corresponding completed $L$-function
\begin{align}
\label{Lambdafdef}
 \Lambda(s, \operatorname{sym}^2 f)=& \pi^{-3s/2}\Gamma \left( \frac {s+1}{2} \right)\Gamma \left(\frac {s+\kappa-1}{2} \right) \Gamma \left( \frac {s+\kappa}{2} \right) L(s, \operatorname{sym}^2 f)
\end{align}
  is entire and satisfies the functional equation
$\Lambda(s, \operatorname{sym}^2 f)=\Lambda(1-s, \operatorname{sym}^2 f)$.

\subsection{Functional equations for Dirichlet $L$-functions}
	
	A key ingredient needed in our proof of Theorem \ref{Theorem for all characters} is the following functional equation for all Dirichlet characters $\chi$ modulo $n$ from \cite[Proposition 2.3]{Cech1}.
\begin{lemma}
\label{Functional equation with Gauss sums}
		Let $\chi$ be any Dirichlet character modulo $n \neq \square$ such that $\chi(-1)=1$. Then we have
\begin{equation}
\label{Equation functional equation with Gauss sums}
			L(s,\chi)=\frac{\pi^{s-1/2}}{n^s}\frac{\Gamma\bfrac{1-s}{2}}{\Gamma\bfrac {s}2} K(1-s,\chi), \quad \mbox{where} \quad K(s,\chi)=\sum_{q=1}^\infty\frac{\tau(\chi,q)}{q^s}.
\end{equation}
\end{lemma}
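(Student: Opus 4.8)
The plan is to derive the identity \eqref{Equation functional equation with Gauss sums} from Hurwitz's functional equation for the Hurwitz zeta function $\zeta(s,a)=\sum_{m=0}^{\infty}(m+a)^{-s}$, followed by an elementary manipulation of gamma factors. First I would, for $\Re(s)>1$, group the terms of $L(s,\chi)=\sum_{m=1}^{\infty}\chi(m)m^{-s}$ according to the residue of $m$ modulo $n$, obtaining
\begin{equation*}
L(s,\chi)=\frac{1}{n^{s}}\sum_{j=1}^{n}\chi(j)\,\zeta\!\left(s,\frac{j}{n}\right).
\end{equation*}
Since $n\neq\square$ forces $n>1$, the term $j=n$ carries the factor $\chi(n)=0$, so only the indices $1\le j\le n-1$ (for which $j/n\notin\mathbb{Z}$) contribute; the same observation shows that the Gauss-sum series on the right of \eqref{Equation functional equation with Gauss sums} is in fact entire, each periodic zeta value $\sum_{q\ge1}e(qj/n)q^{-w}$ being entire in $w$ when $j/n\notin\mathbb{Z}$.

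Next, working in the half-plane $\Re(s)<0$, I would substitute Hurwitz's formula
\begin{equation*}
\zeta\!\left(s,\frac{j}{n}\right)=\frac{\Gamma(1-s)}{(2\pi)^{1-s}}\left(e^{-\pi i(1-s)/2}\sum_{q=1}^{\infty}\frac{e(qj/n)}{q^{1-s}}+e^{\pi i(1-s)/2}\sum_{q=1}^{\infty}\frac{e(-qj/n)}{q^{1-s}}\right),
\end{equation*}
in which the two series converge absolutely, so the sums over $j$ and over $q$ may be interchanged. The inner sums over $j\bmod n$ produce $\tau(\chi,q)$ and $\tau(\chi,-q)$, and the hypothesis $\chi(-1)=1$ gives $\tau(\chi,-q)=\tau(\chi,q)$; the two exponential prefactors then combine into $2\cos\!\big(\pi(1-s)/2\big)$, and the surviving $q$-sum is precisely $K(1-s,\chi)$. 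This yields, for $\Re(s)<0$,
\begin{equation*}
L(s,\chi)=\frac{1}{n^{s}}\cdot\frac{2\,\Gamma(1-s)}{(2\pi)^{1-s}}\cos\!\left(\frac{\pi(1-s)}{2}\right)K(1-s,\chi).
\end{equation*}

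It then remains to identify the archimedean factor. Writing $\cos(\pi(1-s)/2)=\sin(\pi s/2)$, applying Legendre's duplication formula $\Gamma(z)\Gamma(z+\tfrac12)=2^{1-2z}\sqrt{\pi}\,\Gamma(2z)$ with $2z=1-s$, and Euler's reflection formula $\Gamma(\tfrac{s}{2})\Gamma(1-\tfrac{s}{2})=\pi/\sin(\pi s/2)$, one checks
\begin{equation*}
\frac{2\,\Gamma(1-s)}{(2\pi)^{1-s}}\cos\!\left(\frac{\pi(1-s)}{2}\right)=\pi^{s-1/2}\,\frac{\Gamma\!\big(\tfrac{1-s}{2}\big)}{\Gamma\!\big(\tfrac{s}{2}\big)},
\end{equation*}
which turns the previous display into \eqref{Equation functional equation with Gauss sums} on the region $\Re(s)<0$. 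Finally, since $L(s,\chi)$ continues to an entire function (here the condition $n\neq\square$ enters, to guarantee in the applications that the quadratic character concerned is non-principal), while $K(1-s,\chi)$ is entire and $\Gamma\!\big(\tfrac{1-s}{2}\big)/\Gamma\!\big(\tfrac{s}{2}\big)$ is meromorphic on $\mathbb{C}$, the identity propagates to all $s$ by analytic continuation.

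The computation is largely bookkeeping; the two places needing care are the justification of the termwise interchange of summation (carried out in $\Re(s)<0$, where Hurwitz's series converge absolutely, before extending) and the precise shape of the duplication/reflection step, which must land exactly on the normalisation $n^{-s}\pi^{s-1/2}\Gamma(\tfrac{1-s}{2})/\Gamma(\tfrac{s}{2})$ of \eqref{Equation functional equation with Gauss sums}. The one genuinely structural input is the evenness hypothesis $\chi(-1)=1$: it is exactly what collapses the two exponential sums into a single cosine — for an odd character one would instead be left with a sine and the complementary gamma factor $\Gamma(\tfrac{2-s}{2})/\Gamma(\tfrac{1+s}{2})$ — so the result in the stated form is special to even characters.
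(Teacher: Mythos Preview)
Your proof is correct. The paper itself does not prove this lemma at all: it simply quotes it as \cite[Proposition~2.3]{Cech1}. Your derivation via Hurwitz's functional equation for $\zeta(s,j/n)$, followed by the duplication/reflection cleanup of the archimedean factor, is the standard route and almost certainly the one taken in \v Cech's paper as well.

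One small remark on the final analytic-continuation step. You do not actually need $L(s,\chi)$ to be entire: once the identity is established on the open half-plane $\Re(s)<0$, it extends automatically as an identity of meromorphic functions, since $L(s,\chi)$ is meromorphic on $\mathbb{C}$ for \emph{any} Dirichlet character and you have already shown $K(1-s,\chi)$ is entire. In particular, the literal hypothesis ``$n\neq\square$'' in the lemma is only used to force $n>1$ (so that the contributing indices satisfy $j/n\notin\mathbb{Z}$ and the periodic zeta values are entire); the non-principality of $\chi$, which you correctly flag as the intended content of this hypothesis in the applications, is not needed for the identity itself. This is a quibble about how the lemma is stated rather than about your argument.
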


\subsection{Bounding $L$-functions}

In this section, we gather various estimates on the values of $L$-functions under GRH, necessary in the sequal.  For any quadratic character $\psi$ modulo $n$, we write $\widehat{\psi}$ for the primitive character that induces $\psi$. Note that every such $\widehat\psi$ can be written in form $\widehat\psi=\chi^{(d)}$ for some fundamental discriminant $d|n$ (see \cite[Theorem 9.13]{MVa1}). We further write $n$ uniquely as $n=n_1n_2$ with $(n_1, d)=1$ and $p |n_2 \Rightarrow p|d$. Using these notations, we get that for any integer $q$,
\begin{align}
\label{Ldecomp}
\begin{split}
 L^{(q)}( s, f \otimes \psi ) =L( s, f \otimes \widehat{\psi}) \prod_{p|qn_1} \lz 1-\frac {\alpha_f(p,1)\widehat\psi(p)}{p^s} \pz \lz 1-\frac {\alpha_f(p,2)\widehat\psi(p)}{p^s} \pz.
\end{split}
\end{align}

  We now apply \eqref{alpha} to see that for $i=1,2$,
\begin{align*}
 \Big |1-\frac {\alpha_f(p,i)\widehat\psi(p)}{p^s}\Big | \leq 2p^{\max (0,-\Re(s))}.
\end{align*}

  It follows from the above that
\begin{align}
\label{Lnbound}
\begin{split}
 \prod_{p|qn_1} \lz 1-\frac {\alpha_f(p,1)\widehat\psi(p)}{p^s} \pz \lz 1-\frac {\alpha_f(p,2)\widehat\psi(p)}{p^s} \pz \ll 4^{\omega(q_1n)}(qn_1)^{\max (0,-\Re(2s))} \ll (qn_1)^{\max (0,-\Re(2s))+\varepsilon},
\end{split}
\end{align}
  where the last estimate above follows from \eqref{omegabound}. \newline

   It follows from this and the functional equation in \eqref{equ:FEfd} that, for $\Re(s) \leq 1/2$,
\begin{align}
\label{Ldecomp1}
\begin{split}
 L( s, f \otimes \widehat{\psi}) \ll n^{1-2\Re(s)}\frac {\Gamma(1-s+ \frac{\kappa -1}{2})}{\Gamma (s+ \frac{\kappa -1}{2})}L( 1-s, f \otimes \widehat{\psi}) \ll (n(1+|s|))^{1-2\Re (s)}L( 1-s, f \otimes \widehat{\psi}),
\end{split}
\end{align}
  where the last bound above follows from Stirling's formula (see \cite[(5.113)]{iwakow}), which gives that for constants $c_0, d_0$,
\begin{align} \label{Stirlingratio}
  \frac {\Gamma(c_0(1-s)+ d_0)}{\Gamma (c_0s+ d_0)} \ll (1+|s|)^{c_0(1-2\Re (s))}.
\end{align}

   We further note from \cite[Theorem 5.19, Corollary 5.20]{iwakow} that under GRH, we have for  $\Re(s) \geq 1/2$,
\begin{align} \label{PgLest1}
\begin{split}
& \big| L\lz s, f \otimes \widehat\psi \pz \big |, \quad \big| L\lz s,  \widehat\psi \pz \big | \ll |sn|^{\varepsilon}, \quad \big|L\lz s, \sym f \pz \big |  \ll |s|^{\varepsilon}.
\end{split}
\end{align}

  The above combined with \eqref{Ldecomp}--\eqref{PgLest1} now implies that
\begin{align}
\label{Lfchibound1}
\begin{split}
 L^{(q)}( s, f \otimes \psi ) \ll (qn_1)^{\max (0,-\Re(s))+\varepsilon}(n(1+|s|))^{\max \{1-2\Re (s), 0\} +\varepsilon}.
\end{split}
\end{align}

  Our discussions above also apply to other $L$-functions. For example, for the Dirichlet $L$-function $L(s, \psi)$.  Under our notation above, if $d$ is a fundamental discriminant, then we have the following functional equation (see \cite[p. 456]{sound1}) for $L(s, \chi^{(d)})$.
\begin{align*}
  \Lambda(s, \chi^{(d)}) =: \Big(\frac {|d|} {\pi} \Big )^{\frac {s}{2}}\Gamma \Big(\frac {s}{2} \Big)L(s, \chi^{(d)})=\Lambda(1-s,  \chi^{(d)}).
\end{align*}
  It follows from this and \eqref{Stirlingratio} that
\begin{align}
\label{Lchibound1}
\begin{split}
 L^{(q)}( s, \psi ) \ll (qn_1)^{\max (0,-\Re(s))+\varepsilon}(n(1+|s|))^{\max \{1/2-\Re (s),0 \} +\varepsilon}.
\end{split}
\end{align}

  Similarly, by \eqref{Lambdafdef} and \eqref{Stirlingratio}, we get
\begin{align} \label{Lsymbound}
\begin{split}
 L^{(q)}(s, \sym f )  \ll q^{\max (0,-\Re(s))+\varepsilon}(1+|s|)^{\max \{3(1/2-\Re (s)), 0\} +\varepsilon}.
\end{split}
\end{align}

Lastly, from \cite[Theorem 5.19, Corollary 5.20]{iwakow}, we have, under GRH, for  $\Re(s) \geq 1/2+\varepsilon$,
\begin{align} \label{PgLest2}
\begin{split}
&  \big| L( s, f \otimes \widehat\psi ) \big |^{-1} \ll |sn|^{\varepsilon}.
\end{split}
\end{align}

\subsection{Some results on multivariable complex functions}
	
   We include in this section some results from multivariable complex analysis. First we need the notation of a tube domain.
\begin{defin}
		An open set $T\subset\mc^n$ is a tube if there is an open set $U\subset\mr^n$ such that $T=\{z\in\mc^n:\ \Re(z)\in U\}.$
\end{defin}
	
   For a set $U\subset\mr^n$, we define $T(U)=U+i\mr^n\subset \mc^n$.  We have the following Bochner's Tube Theorem \cite{Boc}.
\begin{theorem}
\label{Bochner}
		Let $U\subset\mr^n$ be a connected open set and $f(z)$ be a function holomorphic on $T(U)$. Then $f(z)$ has a holomorphic continuation to the convex hull of $T(U)$.
\end{theorem}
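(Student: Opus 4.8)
The plan is to deduce this classical statement from the continuity principle for analytic discs, after two soft reductions; I will use standard facts from several complex variables freely. Write $T(U)=U+i\mr^n$, let $\widehat U\subset\mr^n$ be the convex hull of $U$, and note that the convex hull of $T(U)$ is then $T(\widehat U)=\widehat U+i\mr^n$. It suffices to show that $f$ admits an analytic continuation, within $\mc^n$, to a tube over some neighbourhood of every point of $\widehat U$: since $T(U)$ is connected and $T(\widehat U)$ is convex, hence simply connected, the monodromy theorem then guarantees that these continuations fit together into a single-valued holomorphic function on all of $T(\widehat U)$. Let $\Omega_0\subset\mr^n$ be the set of points to which $f$ so continues; it is open and contains $U$, and it is enough to prove that $\Omega_0$ is convex, since then $\Omega_0\supseteq\widehat U$. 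Convexity of $\Omega_0$ is precisely the statement that $[a,b]\subset\Omega_0$ whenever $a,b\in\Omega_0$.

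To prove this segment property, I would join $a$ to $b$ by a path $\gamma$ in $\Omega_0$ and cover its (compact) image by finitely many open balls $B_0,\dots,B_N\subset\Omega_0$ with $a\in B_0$, $b\in B_N$ and $B_j\cap B_{j+1}\neq\varnothing$, along which $f$ continues analytically through the chain. Using the elementary identity $\operatorname{conv}(\operatorname{conv}(X)\cup Y)=\operatorname{conv}(X\cup Y)$ and applying, successively for $j=0,\dots,N-1$ with $C_0:=B_0$ and $C_{j+1}:=\operatorname{conv}(C_j\cup B_{j+1})$, the assertion that

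\emph{whenever $V_1,V_2\subset\mr^n$ are open and convex with $V_1\cap V_2\neq\varnothing$, every function $g$ holomorphic on $T(V_1\cup V_2)$ extends holomorphically to $T(\operatorname{conv}(V_1\cup V_2))$,}

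one obtains an analytic continuation of $f$ to the tube over $\operatorname{conv}(B_0\cup\dots\cup B_N)$, a convex open set containing $a$ and $b$ and hence the whole segment $[a,b]$. As every point of this segment thereby acquires the required continuation, $[a,b]\subset\Omega_0$, and the theorem is reduced to the displayed two--convex--set assertion.

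That assertion is the analytic heart of the matter, and I would establish it via the continuity principle for analytic discs (the Behnke--Sommer continuity theorem). After an affine change of coordinates --- which extends to an affine automorphism of $\mc^n$ carrying tubes to tubes --- one normalises $V_1$, $V_2$ and a target point $x^0\in\operatorname{conv}(V_1\cup V_2)$, and then constructs a continuous one-parameter family of closed analytic discs $\varphi_s\colon\overline{\Delta}\to\mc^n$, $s\in[0,1]$, with $\varphi_0(\overline{\Delta})\subset T(V_1\cup V_2)$ and $\varphi_s(\partial\Delta)\subset T(V_1\cup V_2)$ for all $s$, whose images collectively cover a tube over a neighbourhood of $x^0$; here the invariance of the tube under imaginary translations $z\mapsto z+iy$ is exactly what supplies the room for the disc boundaries to remain inside $T(V_1\cup V_2)$ while the discs push across the ``missing wedge'' of the convex hull, and the continuity principle then yields the extension of $g$ past $x^0$. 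The main obstacle is this construction: one must write the family $\varphi_s$ down explicitly --- equivalently, produce a Cauchy-type integral representation of $g$ in one complex variable together with an admissible deformation of its contour --- and verify both that no boundary $\varphi_s(\partial\Delta)$ ever leaves $T(V_1\cup V_2)$ and that the swept region really exhausts $\operatorname{conv}(V_1\cup V_2)$, not merely some smaller convex set. The residual bookkeeping --- single-valuedness of the resulting global extension, and the passage from segments to the full convex hull of $T(U)$ --- is then taken care of by the monodromy and chain-of-balls arguments above.
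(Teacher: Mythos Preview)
The paper does not prove this statement at all: Theorem~\ref{Bochner} is simply quoted as Bochner's classical tube theorem, with a reference to Bochner's original paper \cite{Boc}, and is then used as a black box in the analytic continuation arguments of Section~3. There is therefore no ``paper's own proof'' to compare your proposal against.

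As for your outline itself, the overall architecture --- reduce to the two-convex-set lemma by a chain-of-balls argument and monodromy, then push analytic discs across the missing wedge using the imaginary-translation invariance of the tube --- is one of the standard routes to Bochner's theorem. You are candid that the heart of the matter, namely writing down the one-parameter family $\varphi_s$ (equivalently, the Cauchy-type integral with a deformable contour) and checking that the boundaries stay inside $T(V_1\cup V_2)$ while the interiors sweep out the convex hull, is left unspecified. That step is genuinely where the work lies; the usual way to make it concrete is to normalise so that $0\in V_1$, $e_1\in V_2$, pick $p\in V_1\cap V_2$, and use the triangle $0,e_1,p$ (whose sides $[0,p]$ and $[e_1,p]$ lie in $V_1$ and $V_2$ respectively) to build a contour in a single complex direction whose real-part projection traverses the two convex pieces while the imaginary part is pushed to $\pm\infty$ along the tube. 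Until that construction is written out and verified, your proposal remains a correct strategic plan rather than a proof.
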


The convex hull of an open set $T\subset\mc^n$ is denoted by $\widehat T$.  Then we quote the result from \cite[Proposition C.5]{Cech1} on the modulus of holomorphic continuations of functions in multiple variables.
\begin{prop}
\label{Extending inequalities}
		Assume that $T\subset \mc^n$ is a tube domain, $g,h:T\rightarrow \mc$ are holomorphic functions, and let $\tilde g,\tilde h$ be their holomorphic continuations to $\widehat T$. If  $|g(z)|\leq |h(z)|$ for all $z\in T$, and $h(z)$ is nonzero in $T$, then also $|\tilde g(z)|\leq |\tilde h(z)|$ for all $z\in \widehat T$.
\end{prop}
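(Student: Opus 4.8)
The plan is to reduce the statement to the single inequality $|\tilde\phi(z)|\le 1$ on $\widehat T$, where $\tilde\phi$ is the holomorphic continuation of the ratio $\phi:=g/h$, and then to deduce that inequality by applying Bochner's Tube Theorem not to $\phi$ itself but to the auxiliary one-parameter family $(1+\lambda\phi)^{-1}$ with $|\lambda|<1$. The only structural fact needed about $\widehat T$ is that, being a convex hull, it is convex and hence connected, so that the identity theorem applies on it.

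First I would note that, since $h$ is zero-free on $T$, the function $\phi:=g/h$ is holomorphic on $T$ with $|\phi|=|g|/|h|\le 1$ there; by Theorem~\ref{Bochner} it continues holomorphically to a function $\tilde\phi$ on $\widehat T$. On $T$ one has $\tilde g=\phi h=\tilde\phi\,\tilde h$, and since $\tilde g$ and $\tilde\phi\,\tilde h$ are holomorphic on the connected set $\widehat T$ and coincide on the nonempty open subset $T$, the identity theorem gives $\tilde g=\tilde\phi\,\tilde h$ throughout $\widehat T$; hence $|\tilde g(z)|=|\tilde\phi(z)|\,|\tilde h(z)|$ for all $z\in\widehat T$, and it suffices to show $|\tilde\phi(z)|\le 1$ on $\widehat T$. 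For this, fix $\lambda\in\mathbb{C}$ with $|\lambda|<1$; then $|1+\lambda\phi|\ge 1-|\lambda|>0$ on $T$, so $F_\lambda:=(1+\lambda\phi)^{-1}$ is holomorphic on $T$ and, by Theorem~\ref{Bochner} again, continues to a holomorphic $E_\lambda$ on $\widehat T$. On $T$ we have $E_\lambda(1+\lambda\tilde\phi)=F_\lambda(1+\lambda\phi)=1$, so by the identity theorem $E_\lambda(1+\lambda\tilde\phi)\equiv 1$ on $\widehat T$; in particular $1+\lambda\tilde\phi$ is zero-free on $\widehat T$. As $\lambda$ ranges over $0<|\lambda|<1$ the numbers $-\lambda^{-1}$ exhaust $\{w\in\mathbb{C}:|w|>1\}$, so $\tilde\phi(z)$ omits every value of modulus exceeding $1$; that is, $|\tilde\phi(z)|\le 1$ for all $z\in\widehat T$, which completes the argument.

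The one genuinely creative point is the choice of the auxiliary family $(1+\lambda\phi)^{-1}$: invoking Bochner's theorem directly on $\phi$ yields the continuation but no control of its modulus, and the naive alternative of slicing $\widehat T$ by complex lines and applying the Hadamard three-lines theorem runs into the difficulty that one has no a priori growth bound for $\tilde\phi$ in the imaginary directions. Converting the desired modulus estimate into the zero-freeness of $1+\lambda\tilde\phi$ replaces an inequality by an application of the identity theorem and thereby sidesteps all growth considerations. Everything else is routine: the two appeals to the identity theorem on the connected set $\widehat T$, and—if one wishes to spell it out—the elementary identification $\widehat T=T(\mathrm{conv}(U))$ when $T=T(U)$, which uses only that $\mathbb{R}^n$ is a linear subspace.
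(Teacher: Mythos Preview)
The paper does not actually prove this proposition; it merely quotes the statement from \cite[Proposition~C.5]{Cech1}. There is thus no argument in the present paper to compare yours against.

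Your proof is correct. The reduction to showing $|\tilde\phi|\le 1$ on $\widehat T$ for $\phi=g/h$ is the natural first move, and the key idea---applying Theorem~\ref{Bochner} to $(1+\lambda\phi)^{-1}$ for each fixed $|\lambda|<1$---neatly converts the desired modulus inequality into the zero-freeness of $1+\lambda\tilde\phi$ on $\widehat T$, which the identity theorem (valid since $\widehat T$ is convex, hence connected, and $T$ is a nonempty open subset) then delivers. The final step, that $\{-1/\lambda:0<|\lambda|<1\}$ exhausts $\{w:|w|>1\}$, closes the argument. Both invocations of the identity theorem are justified, and your remark that a direct three-lines approach would require an a~priori growth bound on $\tilde\phi$ along imaginary directions is a fair explanation of why the $\lambda$-family trick is the right tool here.
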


\section{Proof of Theorem \ref{Theorem for all characters}}

Let $\mu$ denote the M\"obius function. Using the notations defined in Section \ref{sec:cusp form}, we define for $\Re(s), \Re(w), \Re(z)$ large enough,
\begin{align}
\label{Aswzexp}
\begin{split}
A(s,w,z;f)= \sum_{\substack{(n,2)=1 }}\frac{L^{(2)}(w, f \otimes \chi_{n})}{L^{(2)}(z,f \otimes \chi_{n})n^s}
=\sum_{\substack{(nmk,2)=1}}\frac{\lambda_f(m)c_f(k)\chi_n(k)\chi_n(m)}{k^zm^wn^s} = \sum_{\substack{(mk,2)=1}}\frac{\lambda_f(m)c_f(k)L\lz s,\chi^{(4mk)} \pz}{m^wk^z}.
\end{split}
\end{align}

We shall devote the next few sections to articulating the analytical properties of $A(s,w,z; f)$ as the proof of Theorem \ref{Theorem for all characters} relies crucially on them.
	
\subsection{First region of absolute convergence of $A(s,w,z;f)$}

   We start with the series representation for $A(s,w,z;f)$ given by the first equality in \eqref{Aswzexp}.  This gives that if $\Re(z)>1/2$,
\begin{align}
\begin{split}
\label{Abound}
		A(s,w,z;f)=& \sum_{\substack{(n,2)=1}}\frac{L^{(2)}(w,f \otimes \chi_n)}{L^{(2)}(z, f \otimes \chi_n)n^s}\\
=& \sum_{\substack{(h,2)=1}}\frac {1}{h^{2s}}\sumstar_{\substack{(n,2)=1}}\frac{L^{(2)}(w, f \otimes \chi_{n})\prod_{p | h}(1-\alpha_f(p,1)\chi_{n}(p)p^{-w})(1-\alpha_f(p,2)\chi_{n}(p)p^{-w}) }{n^s L^{(2)}(z, f \otimes \chi_{n})\prod_{p | h}(1-\alpha_f(p,1)\chi_{n}(p)p^{-z})(1-\alpha_f(p,2)\chi_{n}(p)p^{-z})} \\
\ll& \sum_{\substack{(h,2)=1}}\frac {h^{\max (0,-2\Re(w))+\varepsilon} }{h^{2s}}\sumstar_{\substack{(n,2)=1}}\Big | \frac{L^{(2)}(w, f \otimes \chi_n)}{L^{(2)}(z,f \otimes \chi_{n})n^s} \Big |,
\end{split}
\end{align}
  where $\sum^*$ henceforth denotes the sum over square-free integers and the last estimate above is obtained using \eqref{Lnbound} and a similar observation that if $\Re(z)>1/2$,
\begin{align} \label{prodinvest}
\begin{split}
		\prod_{p | h}(1-\alpha_f(p,1)\chi_{n}(p)p^{-z})^{-1}(1-\alpha_f(p,2)\chi_{n}(p)p^{-z})^{-1} \ll h^{\varepsilon}.
\end{split}
\end{align}

  Recall that we have $L^{(2)}(w, f \otimes \chi_n)=L(w, f \otimes \chi^{(\pm 4n)})$ for $n \equiv \pm 1 \pmod 4$.  We write $\widetilde \chi_n$ for the primitive Dirichlet character that induces $\chi^{(\pm 4n)}$. For a square-free $n$, we see that $\widetilde \chi_n=\chi^{(n)}$ is a primitive character modulo $n$ if $n \equiv 1 \pmod 4$ and $\widetilde \chi_n=\chi^{(4n)}$ is a primitive character modulo $4n$ if $n \equiv -1 \pmod 4$.  For $n \equiv 1 \pmod 4$, we have
\begin{align*}
\begin{split}
	\big|L^{(2)}(w, f \otimes \chi_n)\big|=&	\big|(1-\alpha_f(2,1)\widetilde\chi_n(2)2^{-w})(1- \alpha_f(2,2)\widetilde\chi_n(2)2^{-w})L(w,f \otimes \widetilde\chi_n)\big| \ll  \big|L(w,f \otimes \widetilde\chi_n)\big |.
\end{split}
\end{align*}
 The above bound also holds for $n \equiv -1 \pmod 4$. \newline

   Similarly, if $\Re(z)>1/2$, we have, by \eqref{PgLest2} and an estimation analogous to \eqref{prodinvest}, that under GRH,
\begin{align*}
\begin{split}
	|L^{(2)}(z, f \otimes \chi_n)|^{-1} \ll n^{\varepsilon}|L(z,f \otimes \widetilde\chi_n)|^{-1} \ll (|z|n)^{\varepsilon}.
\end{split}
\end{align*}

The above, together with \eqref{Abound}, allows us to deduce that if $\Re(z) >1/2$, then
\begin{align}
\begin{split}
\label{Aboundinitial}
		A(s,w,z;f)
\ll& \sum_{\substack{(h,2)=1}}\frac {h^{\max (0,-2\Re(w))+\varepsilon} }{h^{2s}}\sumstar_{\substack{(n,2)=1}}\frac{|L(w, f \otimes \widetilde\chi_n)|}{|n^{s-\varepsilon}|}.
\end{split}
\end{align}

   We now apply \eqref{Lfchibound1} to deduce from the above that under GRH, except for a simple pole at $w=1$, both sums of the right-hand side expression in \eqref{Aboundinitial} are convergent for $\Re(s)>1$,  $\Re(w) \geq 1/2$, $\Re(z)>1/2$ as well as for $\Re(2s)>1$, $\Re(2s+2w)>1$, $\Re(s+2w)>2$, $\Re(w) < 1/2$, $\Re(z)>1/2$. \newline

  We thus conclude that the function $A(s,w,z;f)$ converges absolutely in the region
\begin{equation*}
		S_0=\{(s,w,z): \Re(s)>1,\ \Re(2s+2w)>1,\ \Re(s+2w)>2,\ \Re(z)> \tfrac{1}{2} \}.
\end{equation*}

     Note that the condition $\Re(2s+2w)>1$ is implied by the other conditions so that we have
\begin{equation*}
		S_0=\{(s,w,z): \Re(s)>1, \ \Re(s+2w)>2,\ \Re(z)> \tfrac{1}{2} \}.
\end{equation*}

Next, we deduce from the last expression of \eqref{Aswzexp} that $A(s,w,z;f)$ is given by the series
\begin{align}
\label{Sum A(s,w,z) over n}		
A(s,w,z;f)=&\sum_{\substack{(mk,2)=1}}\frac{\lambda_f(m)c_f(k) L( s, \chi^{(4mk)})}{m^wk^z}.
\end{align}

We write $mk=(mk)_0(mk)^2_1$ with $(mk)_0$ odd and square-free. Note that $\chi^{((mk)_0)}$ is a primitive character modulo $(mk)_0$ when $(mk)_0 \equiv 1 \pmod 4$.   We now apply \eqref{lambdabound}, \eqref{abound} and \eqref{Lchibound1} to see that, except for a simple pole at $s=1$ arising from the summands with $mk=\square$, the sums over $m,k$ such that $(mk)_0 \equiv 1 \pmod 4$ in \eqref{Sum A(s,w,z) over n} converge absolutely in the region
\begin{align*}
		S_1=& \{(s,w,z): \Re(w)>1,\ \Re(z)>1,\ \Re(s)\geq \tfrac 12 \} \bigcup \{(s,w,z):  0 \leq \Re(s)< \tfrac 12, \ \Re(s+w)>\tfrac32,\ \Re(s+z)>\tfrac32 \} \\
   & \hspace*{3cm} \bigcup \{(s,w,z): \Re(s)<0, \ \Re(2s+w)>\tfrac32,\ \Re(2s+z)>\tfrac32 \}.
\end{align*}

  Note that similar estimations hold when $(mk)_0 \equiv 2,3 \pmod 4$, in which case $\chi^{(4(mk)_0)}$ is a primitive character modulo $4(mk)_0$.  We thus conclude that, except for a simple pole at $s=1$ arising from the summands with $mk=\square$, the function $A(s,w,z;f)$ converges absolutely in the region $S_1$. \newline

 To determine the convex hull of $S_0$ and $S_1$, we first note that for a fixed $\Re(z_0)>1$, the points $(1/2, 1, z_0)$ and $(1, 1/2, z_0)$ are in the closures of $S_0$ and $S_1$, respectively. These two points determine a line segment: $\Re(s+w)=3/2$ with $1/2 \leq \Re(s) \leq 1$ on the plane $\Re(z)=\Re(z_0)$. Note further that when $\Re(s)>1/2$ and $\Re(z)>1$, the conditions $\Re(s+z)>3/2$ and $\Re(2s+z)>3/2$ are automatically satisfied. We then deduce that the convex hull of $S_0$ and $S_1$ contains points $(s,w,z)$ satisfying
\begin{equation*}
		\{\Re(z)>1, \ \Re(s)>1/2, \ \Re(s+2w)>2,\ \Re(s+z)> \tfrac{3}{2}, \ \Re(s+w)>\tfrac32, \ \Re(2s+z)>\tfrac32\}.
\end{equation*}
  Combining the above set with the subsets of $S_1$ containing points with $\Re(s)<1/2$, we get that the convex hull of $S_0$ and $S_1$ contains points $(s,w,z)$ satisfying
\begin{equation}
\label{ConvexhuallA(s,w,z)}
		\{\Re(z)>1, \Re(s+2w)>2,\ \Re(s+z)> \tfrac{3}{2}, \ \Re(s+w)>\tfrac32, \ \Re(2s+w)>\tfrac32,\ \Re(2s+z)>\tfrac32\}.
\end{equation}
On the other hand, if $1/2 < \Re(z)< 1$, the points in $S_0$ are certainly contained in the convex hull of $S_0$ and $S_1$. We may thus focus on the case $\Re(s)<1$.  In this case, we note that the points $(1,1/2, 1/2)$, $(1,1/2,1)$ are in the closure of $S_0$ and $(1/2,1,1)$ the closure of $S_1$. These three points determine a triangular region which can be regarded as the base of the region $\mathcal R$ enclosed by the four planes: $\Re(s+w)=3/2$, $\Re(s)=1$, $\Re(z)=1$, $\Re(s+z)=3/2$. It follows that the points in this triangle region are all in the convex hulls of $S_0$ and $S_1$.  Further note that the points on the boundary $\mathcal R \cap \{\Re(z)=1\}$ of $\mathcal R$ are all in the convex hulls of $S_0$ and $S_1$ since they can be identified with the set $\{(s, w, z): \Re(z)=1,  1/2 \leq \Re(s) \leq 1, \Re(s+w) \geq 3/2\}$ and hence is contained in the convex hull of the set given in \eqref{ConvexhuallA(s,w,z)}.  Hence the entire region $\mathcal R$ lies in the convex hull of $S_0$ and $S_1$.  Next, if $1/2 < \Re(z)< 1$, the condition $\Re(s+z)>3/2$ implies that $\Re(s)>1/2$ so that one also has $\Re(2s+z)>3/2$. Similarly, when $1/2 < \Re(z)< 1$, the condition $\Re(s+w)>3/2$ implies that $\Re(2s+w)>3/2$. Lastly, the condition $\Re(s+2w)>2$ implies $\Re(s+w)>3/2$ and $\Re(2s+w)>3/2$ for $\Re(s)>1$.  It follows from the discussions here that the intersection of the convex hulls of $S_0$ and $S_1$ thus equals
\begin{equation}
\label{Region of convergence of A(s,w,z)}
		S_2=\{(s,w,z):\Re(z)> \tfrac{1}{2},\ \Re(s+2w)>2,\ \Re(s+z)> \tfrac{3}{2}, \ \Re(s+w)>\tfrac32, \ \Re(2s+w)>\tfrac32,\ \Re(2s+z)>\tfrac32\}.
\end{equation}

The above, together with Theorem~\ref{Bochner}, implies that $(s-1)(w-1)A(s,w,z;f)$ converges absolutely in the region $S_2$.

\subsection{Residue of $A(s,w,z;f)$ at $s=1$} \label{sec:resA}

It follows from \eqref{Sum A(s,w,z) over n} that $A(s,w,z;f)$ has a pole at $s=1$ arising from the terms with $mk=\square$. To compute the corresponding residue and for our treatments later in the proof, we introduce the sum
\begin{align*}
\begin{split}
 A_1(s,w,z;f) =: \sum_{\substack{(mk,2)=1 \\ mk =  \square}}\frac{\lambda_f(m)c_f(k)L\lz s, \chi^{(4mk)}\pz}{m^wk^z}
=  \sum_{\substack{(mk,2)=1 \\ mk =  \square}}\frac{\lambda_f(m)c_f(k)\zeta(s)\prod_{p | 2mk}(1-p^{-s}) }{m^wk^z} .
\end{split}
\end{align*}

 We further denote by $a_t(n)$ for any $t \in \mc$ the multiplicative function such that $a_t(p^k)=1-1/p^t$ for any prime $p$. Thus, we recast $A_1(s,w,z;f)$ as
\begin{align*}
\begin{split}
 A_1(s,w,z;f)
=  \zeta^{(2)}(s)\sum_{\substack{(mk,2)=1 \\ mk =  \square}}\frac{\lambda_f(m)c_f(k)a_s(mk) }{m^wk^z} .
\end{split}
\end{align*}

We now write the last sum above as an Euler product.  Slightly abusing notation by writing $p^{k'}$ with $k' \in \intz$ for the highest power of $p$ dividing $k$, and similarly for $m$. Thus, we obtain, utilizing \eqref{aexp}, that
\begin{align} \label{residuesgen}
\begin{split}
	 A_1& (s,w,z;f) \\
=&  \zeta^{(2)}(s)\prod_{p>2}\sum_{\substack{m,k\geq0\\m+k\even}}\frac{\lambda_f(p^m)c_f(p^k)a_s(p^{m+k})}{p^{mw+kz}}\\
=& \zeta^{(2)}(s)\prod_{p>2}\lz\sum_{\substack{m\geq0\\ 2 \mid m}}\frac{\lambda_f(p^m) a_s(p^m)}{p^{mw}}+\sum_{\substack{m\geq0\\ (m,2)=1}}\frac{c_f(p)\lambda_f(p^m)a_s(p^{m+1})}{p^{z+mw}}+\sum_{\substack{m\geq0\\ 2 \mid m}}\frac{c_f(p^2)\lambda_f(p^m)a_s(p^{m+2})}{p^{2z+mw}}\pz\\
=& \zeta^{(2)}(s)\prod_{p>2}\lz1+\lz 1-\frac 1{p^s} \pz \frac 1{p^{2z}}+\lz 1-\frac 1{p^s} \pz\lz 1+\frac 1{p^{2z}} \pz \sum^{\infty}_{m=1}\frac {\lambda_f(p^{2m})}{p^{2mw}} -\lz1-\frac1{p^s}\pz \frac {\lambda_f(p)}{p^z}\sum^{\infty}_{m=0}\frac {\lambda_f(p^{2m+1})}{p^{(2m+1)w}}  \pz.
\end{split}
\end{align}

  We now use \eqref{lambdafpnu} to infer that (we may assume that $\alpha_f(p,1) \neq \alpha_f(p,2)$ as the other case follows from continuity)
\begin{align}
\label{lambdasum1}
\begin{split}
   \sum^{\infty}_{i=0}\frac {\lambda_f(p^{2i})}{p^{iu}}=&  \sum^{\infty}_{i=0}\frac {1}{p^{iu}}\frac {\alpha^{2i+1}_f(p,1)-\alpha^{2i+1}_f(p,2)}{\alpha_f(p,1)-\alpha_f(p,2)}=  \lz 1-\frac{\alpha^2_f(p,1)}{p^u} \pz^{-1} \lz 1-\frac{\alpha^2_f(p,2)}{p^u} \pz^{-1} \lz 1+ \frac{1}{p^u} \pz.
\end{split}
\end{align}

   Similarly, we have
\begin{align}
\label{lambdasum2}
\begin{split}
 \sum^{\infty}_{i=0}\frac {\lambda_f(p^{2i+1})}{p^{(i+1)u}}
=& \lz 1-\frac{\alpha^2_f(p,1)}{p^u} \pz^{-1} \lz 1-\frac{\alpha^2_f(p,2)}{p^u} \pz^{-1}\frac {\lambda_f(p)}{p^{u}}.
\end{split}
\end{align}

Inserting \eqref{lambdasum1} and \eqref{lambdasum2} into \eqref{residuesgen} yields
\begin{align}
\label{residuesgensimplified}
\begin{split}
A_1&(s,w,z;f) =  \zeta^{(2)}(s)\prod_{p>2}\sum_{\substack{m,k\geq0\\m+k\even}}\frac{\lambda_f(p^m)c_f(p^k)a_s(p^{m+k})}{p^{mw+kz}} \\
=& \zeta^{(2)}(s)\prod_{p>2} \lz 1-\frac{\alpha^2_f(p,1)}{p^{2w}} \pz^{-1} \lz 1-\frac{\alpha^2_f(p,2)}{p^{2w}} \pz^{-1} \\
& \times \prod_{p>2}\lz \lz 1-\frac 1{p^s} \pz \lz 1+\frac 1{p^{2z}} \pz \lz 1+\frac 1{p^{2w}} \pz +\frac 1{p^s}  \lz 1-\frac{\alpha^2_f(p,1)}{p^{2w}} \pz \lz 1-\frac{\alpha^2_f(p,2)}{p^{2w}} \pz -\lz1-\frac1{p^s}\pz \frac {\lambda^2_f(p)}{p^{z+w}} \pz \\
=& \frac {\zeta^{(2)}(s)  L^{(2)}(2w, \sym f)}{\zeta^{(2)}(2w)} \\
& \times \prod_{p>2}\lz \lz 1-\frac 1{p^s} \pz \lz 1+\frac 1{p^{2z}} \pz \lz 1+\frac 1{p^{2w}} \pz+\frac 1{p^s} \lz 1-\frac{\alpha^2_f(p,1)}{p^{2w}} \pz \lz 1-\frac{\alpha^2_f(p,2)}{p^{2w}} \pz -\lz1-\frac1{p^s}\pz \frac {\lambda^2_f(p)}{p^{z+w}}\pz \\
=& \frac {\zeta^{(2)}(s) L^{(2)}(2w, \sym f)}{\zeta^{(2)}(2w)} \\
& \times \prod_{p>2}\lz   1+\frac 1{p^{2z}}  \lz 1-\frac 1{p^s} \pz \lz 1+\frac 1{p^{2w}} \pz +\lz 1-\frac 1{p^s} \pz \frac 1{p^{2w}}-\frac {
\alpha^2_f(p,1)+\alpha^2_f(p,2)}{p^{2w+s}}+\frac 1{p^{4w+s}}  -\lz1-\frac1{p^s}\pz \frac {\lambda^2_f(p)}{p^{z+w}} \pz \\
=& \zeta(s)L^{(2)}(2w, \sym f)P(s,w,z;f),
\end{split}
\end{align}
  where $P(s,w,z;f)$ is defined in \eqref{Pdef}. \newline

  We deduce from \eqref{Pdef} and \eqref{residuesgensimplified} that except for a simple pole at $s=1$, $P(s, w,z;f)$ and $A_1(s,w,z;f)$ are holomorphic in the region
\begin{align}
\label{S3}
	S_3= \{(s,w,z):\ &\Re(2z)>1, \ \Re(s+2z)>1, \ \Re(s+2w)>1,\ \Re(w+z)>1, \ \Re(4w)>1, \ \Re(s+w+z)>1 \}.
\end{align}

Recalling that the residue of $\zeta(s)$ at $s = 1$ is $1$, we arrive at
\begin{align} \label{Residue at s=1}
 \res_{s=1}& A(s, \tfrac{1}{2}+\alpha,\tfrac{1}{2}+\beta;f) = \res_{s=1} A_1(s, \tfrac{1}{2}+\alpha,\tfrac{1}{2}+\beta;f)
=L^{(2)}(1+2\alpha, \sym f)P(1,\tfrac{1}{2}+\alpha,\tfrac{1}{2}+\beta;f).
\end{align}

\subsection{Second region of absolute convergence of $A(s,w,z;f)$}

From \eqref{Sum A(s,w,z) over n}, we get
\begin{align}
\begin{split}
\label{A1A2}
 A(s,w,z;f) =& \sum_{\substack{(mk,2)=1 \\ mk =  \square}}\frac{\lambda_f(m)c_f(k)L\lz s, \chi^{(4mk)}\pz}{m^wk^z} +\sum_{\substack{(mk,2)=1 \\ mk \neq \square}}\frac{\lambda_f(m)c_f(k)L\lz s, \chi^{(4mk)}\pz}{m^wk^z} \\
=&  \sum_{\substack{(mk,2)=1 \\ mk =  \square}}\frac{\lambda_f(m)c_f(k)\zeta(s)\prod_{p | 2mk}(1-p^{-s}) }{m^wk^z} +\sum_{\substack{(mk,2)=1 \\ mk \neq \square}}\frac{\lambda_f(m)c_f(k)L\lz s, \chi^{(4mk)}\pz}{m^wk^z} \\
=: \ & A_1(s,w,z;f)+A_2(s,w,z;f).
\end{split}
\end{align}
We recall from our discussions in the previous section that except for a simple pole at $s=1$, $A_1(s,w,z;f)$ is holomorphic in the region $S_3$. \newline

  Next, we apply the functional equation given in Lemma \ref{Functional equation with Gauss sums} for $L\lz s, \chi^{(4mk)}\pz$ in the case $mk \neq \square$ by observing that $\chi^{(4mk)}$ is a Dirichlet character modulo $4mk$ for any $m,k\geq1$ with $\chi^{(4mk)}(-1)=1$. We obtain from \eqref{Equation functional equation with Gauss sums} that
\begin{align}
\begin{split}
\label{Functional equation in s}
 A_2(s,w,z;f) =\frac{\pi^{s-1/2}}{4^s}\frac {\Gamma (\frac{1-s}2)}{\Gamma(\frac {s}2) } C(1-s,s+w,s+z;f),
\end{split}
\end{align}
  where $C(s,w,z;f)$ is given by the triple Dirichlet series
\begin{align*}
		C(s,w,z;f)=& \sum_{\substack{q, m, k \\ (mk,2)=1 \\ mk \neq \square}}\frac{\lambda_f(m)c_f(k)\tau(\chi^{(4mk)}, q)}{q^sm^wk^z} \\
=& \sum_{\substack{q, m, k \\ \substack{(mk,2)=1}}}\frac{\lambda_f(m)c_f(k)\tau(\chi^{(4mk)}, q)}{q^sm^wk^z}-\sum_{\substack{q, m, k \\ (mk,2)=1 \\ mk = \square}}\frac{\lambda_f(m)c_f(k)\tau(\chi^{(4mk)}, q)}{q^sm^wk^z}.
\end{align*}	

 By \eqref{Region of convergence of A(s,w,z)}, \eqref{S3} and the functional equation \eqref{Functional equation in s}, we see that $C(s,w,z;f)$ is initially defined for $\Re(s)$, $\Re(z)$ and $\Re(w)$ sufficiently large.  To extend this region, we exchange the summations in $C(s,w,z;f)$ and set $mk=l$ to obtain that
\begin{align}
\label{Cexp}
\begin{split}
  C(s,w,z;f)=& \sum^{\infty}_{q =1}\frac{1}{q^s}\sum_{\substack{(l,2)=1}}\frac{\tau\lz \chi^{(4l)}, q \pz r(l, z-w)}{l^w}-\sum^{\infty}_{q =1}\frac{1}{q^s}\sum_{\substack{(l,2)=1 \\ l = \square}}\frac{\tau\lz \chi^{(4l)}, q \pz r(l, z-w)}{l^w}\\
=: \ & C_1(s,w,z;f)-C_2(s,w,z;f),
\end{split}
\end{align}
  where
\begin{align}
\begin{split}
\label{rdef}
	r(l, t)=\sum_{\substack{ k|l}}\frac{\lambda_f(l/k)c_f(k)}{k^{t}}.
\end{split}
\end{align}

   We now define, for two Dirichlet characters $\psi$ and $\psi'$ whose conductors divide $8$,
\begin{align}
\begin{split}
\label{C12def}
	C_1(s,w,z;\psi,\psi',f)=: \ & \sum_{l,q\geq 1}\frac{G\lz \chi_l,q\pz\psi(l)\psi'(q) r(l, z-w)}{l^wq^s}, \quad \mbox{and} \\
C_2(s,w,z;\psi,\psi',f)=: \ &  \sum_{l,q\geq 1}\frac{G \lz \chi_{l^2},q\pz\psi(l)\psi'(q) r(l^2, z-w)}{l^{2w}q^s}.
\end{split}
\end{align}

  Following the arguments contained in \cite[\S 6.4]{Cech1} and making use of Lemma \ref{Lemma changing Gauss sums}, we see that
\begin{align}
\begin{split}
\label{C(s,w,z) as twisted C(s,w,z)}
		C_1(s,w,z;f)=&
			-2^{-s}\big( C_1(s,w,z;\psi_2,\psi_1,f)+C_1(s,w,z;\psi_{-2},\psi_1,f)\big) +4^{-s}\big( C_1(s,w,z;\psi_1,\psi_0,f) \\
			& \hspace*{1.5cm} +C_1(s,w,z;\psi_{-1},\psi_0,f)\big) +C_1(s,w,z;\psi_1,\psi_{-1},f)-C_1(s,w,z;\psi_{-1},\psi_{-1},f), \quad \mbox{and} \\
   C_2(s,w,z;f)=&
			-2^{1-s}C_2(s,w,z;\psi_1,\psi_1,f)+2^{1-2s}C_2(s,w,z;\psi_1,\psi_0,f).
\end{split}
\end{align}

  Note that every integer $q \geq 1$ can be written uniquely as $q=q_1q^2_2$ with $q_1$ square-free. We may thus write
\begin{equation}
\label{Cidef}
		C_i(s,w,z;\psi,\psi',f)=\sumstar_{q_1}\frac{\psi'(q_1)}{q_1^s}\cdot D_i(s, w,z-w;q_1, \psi,\psi',f), \quad i =1,2,
\end{equation}
	where
\begin{align}
\label{Didef}
\begin{split}
		D_1(s, w,t;q_1,\psi,\psi',f)=&\sum_{l,q_2=1}^\infty\frac{G\lz \chi_{l},q_1q^2_2\pz\psi(l)\psi'(q^2_2) r(l, t)}{l^wq^{2s}_{2}}, \quad \mbox{and} \\
D_2(s, w,t;q_1,\psi,\psi',f)=&\sum_{l,q_2=1}^\infty\frac{G\lz \chi_{l^2},q_1q^2_2\pz\psi(l)\psi'(q^2_2) r(l^2, t)}{l^{2w}q^{2s}_{2}}.
\end{split}
\end{align}

  We have the following result for the analytic properties of $D_i(s, w,t; q_1, \psi, \psi', f)$.
\begin{lemma}
\label{Estimate For D(w,t)}
 With the notation as above, for $\psi \neq \psi_0$, the functions $D_i(s, w,t; q_1,\psi,\psi',f), i=1,2$ have meromorphic continuations to the region
\begin{equation*}
		\{(s,w,t): \Re(s)>1, \ \Re(w)>1,\ \Re(w+t)>1\}.
\end{equation*}
	  For $\Re(s)>1+\varepsilon,  \Re(w)>1+\varepsilon$ and $\Re(w+t)>1+\varepsilon$, we have
\begin{align}
\label{Diest}
			|D_i(s, w,t;q_1, \psi,\psi', f)|\ll |q_1w(t+w)|^{\varepsilon}.
\end{align}		
\end{lemma}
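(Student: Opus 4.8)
The plan is to write each $D_i$ as an Euler product using the multiplicativity of the Gauss sum factors $G(\chi_l, q_1q_2^2)$ (Lemma \ref{lem:Gauss}) and of $r(l,t)$ (which is multiplicative in $l$ since both $\lambda_f$ and $c_f$ are), and then to extract from this Euler product an explicit product of shifted $L$-functions times an Euler product that converges in a wider region. Fix the square-free $q_1$ and write $q_1 = \prod_{p} p^{e_p}$ with each $e_p \in \{0,1\}$. For a fixed prime $p$ with $p \nmid 2q_1$, the local factor at $p$ of $D_1$ is
\begin{align*}
\sum_{j,a \geq 0} \frac{G(\chi_{p^j}, p^{2a}) \psi(p)^j \psi'(p)^{2a} r(p^j, t)}{p^{jw} p^{2as}},
\end{align*}
where I have used that $p^{2a}$ is the exact power of $p$ in the $q_2^2$ part. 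By Lemma \ref{lem:Gauss}, $G(\chi_{p^j}, p^{2a})$ vanishes unless $j$ is even with $j \leq 2a$, or $j = 2a+1$, and in those cases it is $\varphi(p^j)$, $-p^{2a}$, or $\leg{\cdot}{p} p^{2a}\sqrt p$ respectively. Summing the geometric-type series in $a$ first, one sees the local factor equals $1 + O(p^{-\min(\Re(w), \Re(s), \Re(w+t), \ldots)})$ plus a main term of size $p^{-2\Re(w)}$ coming from $j=2$; crucially there is no term of size $p^{-\Re(w)}$ because the $j=1$ contribution requires $2a = 0$, forcing $a=0$, and then $G(\chi_p,1)=0$. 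After extracting a factor of $L(2w,\sym f)/\zeta(2w)$-type (just as in the residue computation \eqref{residuesgensimplified}) together with factors $\zeta(\text{linear combinations})$ to absorb the $p^{-\text{linear}}$ main terms, the remaining Euler product over $p \nmid 2q_1$ converges absolutely and is $O(1)$ in the stated region $\Re(s), \Re(w), \Re(w+t) > 1+\varepsilon$. The finitely many bad local factors (at $p \mid 2q_1$, where $\psi, \psi'$ and the exponents $e_p$ intervene and the Gauss-sum formula has its $p$-adic parameter $a$ shifted) are each bounded crudely using $|G(\chi_l,q)| \leq l\sqrt{q}$-type estimates together with \eqref{lambdabound}, \eqref{abound}; their product over $p \mid q_1$ contributes the $|q_1|^\varepsilon$ in \eqref{Diest}, via \eqref{omegabound}.

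For $D_2$ the argument is parallel but simpler: $r(l^2,t)$ and $G(\chi_{l^2},q)$ are supported on $l^2$, the main term is again of size $p^{-2\Re(w)}$, and extracting the appropriate $\zeta$- and $\sym f$-type factors leaves an absolutely convergent product in the same region. The meromorphic continuation to $\{\Re(s)>1, \Re(w)>1, \Re(w+t)>1\}$ is then read off directly: the explicit $L$- and $\zeta$-factors are meromorphic everywhere, and under GRH (or even just classical zero-free regions, since we only need $\Re > 1+\varepsilon$ for the bound and $\Re > 1$ for holomorphy away from the $\sym f$ and $\zeta$ factors' behaviour) the quotient is holomorphic in the claimed range, while the residual Euler product is holomorphic there by absolute convergence. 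The bound \eqref{Diest} follows by combining the $O(1)$ bound on the convergent part, the trivial bounds $|\zeta(\sigma)|, |L(\sigma,\sym f)| \ll |w(t+w)|^\varepsilon$ on the explicit factors for $\Re > 1+\varepsilon$ (using \eqref{PgLest1}), and the $|q_1|^\varepsilon$ from the bad primes.

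The main obstacle I anticipate is the bookkeeping of the local Euler factors at $p \mid 2q_1$ and the verification that, after pulling out the explicit $\zeta$ and $L(\cdot,\sym f)$ factors, the leftover Euler product genuinely converges absolutely for $\Re(w) > 1$ rather than only $\Re(w) > 3/2$ — that is, confirming that the potential $p^{-\Re(w)}$ terms really do cancel. This hinges on the precise vanishing $G(\chi_p, 1) = 0$ (odd prime power, exact $p$-adic valuation $0$) in Lemma \ref{lem:Gauss}, and on tracking how the twists $\psi, \psi'$ and the shift $t$ in $r(l,t)$ interact; the computation is of the same flavour as \eqref{residuesgen}–\eqref{residuesgensimplified} and as \cite[\S 6.4]{Cech1}, so I would model the estimation closely on those.
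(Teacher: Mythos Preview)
Your overall strategy (Euler product, extract global $L$-factors, bound the tail) matches the paper's, but the analysis of the local factor at $p\nmid 2q_1$ contains a genuine error that breaks the argument.

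You assert that the $j=1$ contribution vanishes because ``$G(\chi_p,1)=0$''. This is false: by Lemma~\ref{lem:Gauss}, for $p\nmid q$ one has $a=0$ and hence $G(\chi_p,q)=\leg{q}{p}\sqrt p$, so in particular $G(\chi_p,1)=\sqrt p\neq 0$. Consequently the $(j,a)=(1,0)$ term does \emph{not} vanish; it contributes
\[
\frac{\psi(p)\,\chi^{(q_1)}(p)\,\lambda_f(p)\,(1-p^{-t})}{p^{\,w-1/2}},
\]
which is of size $p^{-(\Re(w)-1/2)}$, strictly larger than the $p^{-2\Re(w)}$ you claim as the leading correction. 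With only the $L(2w,\sym f)/\zeta(2w)$-type factor you propose to extract, the residual Euler product still carries this $p^{-(w-1/2)}$ term and therefore diverges throughout $\{\Re(w)>1\}$; it converges only for $\Re(w)>3/2$, which is exactly the obstruction you flagged but hoped would not occur.

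The paper resolves this by extracting, for each $p\nmid 2q_1$, the factor
\[
\frac{L_p\bigl(w-\tfrac12,\,f\otimes\chi^{(q_1)}\psi\bigr)}
     {L_p(2w-1,\sym f)\,\zeta_p(2w-1)\,L_p\bigl(t+w-\tfrac12,\,f\otimes\chi^{(q_1)}\psi\bigr)},
\]
i.e.\ a ratio of \emph{$q_1$-dependent quadratic twists} of $L(s,f)$ at $w-\tfrac12$ and $t+w-\tfrac12$, together with $\sym f$ and $\zeta$ at $2w-1$ (not $2w$; the $\sqrt p$ from the Gauss sum shifts everything by $\tfrac12$). After this extraction the remaining product is $1+O(p^{-2s}+p^{-(2w+2t-1)}+\cdots)$ and converges absolutely in the stated region. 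The bound $|q_1w(w+t)|^{\varepsilon}$ in \eqref{Diest} then comes chiefly from \eqref{PgLest1} applied to $L(w-\tfrac12,f\otimes\chi^{(q_1)}\psi)$ and \eqref{PgLest2} applied to its companion in the denominator --- that is, the conductor $q_1$ enters through these global twisted $L$-values, not merely through the finitely many bad local factors at $p\mid q_1$ as you suggest. Your analogy with the residue computation \eqref{residuesgensimplified} is misleading here: that computation involves no Gauss sums, so no $\sqrt p$ appears and the relevant shift is indeed $2w$ rather than $2w-1$.
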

\begin{proof}
   As the proofs are similar, we consider only the case for $D_1(s, w,t;q_1, \psi,\psi',f)$ here.  First $D_1(s,w,t;q_1, \psi,\psi',f)$ are jointly multiplicative functions of $l,q_2$ by Lemma \ref{lem:Gauss} in the double sum defining $D_1$ in \eqref{Didef}. Moreover, as $\psi \neq \psi_0$, we may assume that $l$ is odd. We write $D_1(s,w,z;q_1, \psi,\psi',f)$ using \eqref{aexp} into an Euler product so that
\begin{align}
\label{D1Eulerprod}
\begin{split}	
 &	D_1(s, w,t;q_1, \psi,\psi',f)= \prod_p D_{1,p}(s, w,t;q_1, \psi,\psi',f).
\end{split}
\end{align}
	Then we have
\begin{align}
\label{Dexp}
\begin{split}	
 &	D_{1,p}(s, w,t; q_1, \psi,\psi',f)= \displaystyle
\begin{cases}
\displaystyle \sum_{l=0}^\infty\frac{ \psi'(2^{2l})}{2^{2ls}}, & p=2, \\
\displaystyle \sum_{l,k=0}^\infty\frac{ \psi(p^l)\psi'(p^{2k})G\lz \chi_{p^l}, q_1p^{2k} \pz r(p^l, t) }{p^{lw+2ks}}, & p>2.
\end{cases}
\end{split}
\end{align}
	
  Note that for a fixed $p > 2$,
\begin{align} \label{Dgenest}	
\begin{split}
 \sum_{l,k=0}^\infty & \frac{ \psi(p^l)\psi'(p^{2k})G\lz \chi_{p^l}, q_1p^{2k} \pz r(p^l, t)}{p^{lw+2ks}}  \\
 & = \sum_{l=0}^\infty \frac{ \psi(p^l)G\lz \chi_{p^l}, q_1 \pz r(p^l, t)}{p^{lw}}  + \sum_{l \geq 0, k \geq 1}\frac{ \psi(p^l)\psi'(p^{2k})G\lz \chi_{p^l}, q_1p^{2k} \pz r(p^l, t)}{p^{lw+2ks}}.
\end{split}
\end{align}

  Observe that
\begin{align}
\label{rexp}
\begin{split}	
 r(p^l, t)=\lambda_f(p^l)-\frac{\lambda_f(p^{l-1})\lambda_f(p)}{p^{t}}+\frac{\lambda_f(p^{l-2})}{p^{2t}},
\end{split}
\end{align}
  where we denote $\lambda_f(p^{-i})=0$ for integers $i \geq 0$. \newline

  Notice that by \eqref{alpha} and \eqref{lambdafpnu}, we have
\begin{align}
\label{lambdadest}
\begin{split}
		|\lambda_f(p^l)| \leq l+1, \quad l \geq 0.
\end{split}
\end{align}
 It follows from this and \eqref{rexp} that for $l \geq 1$,
\begin{align*}
\begin{split}
		|r(p^l, t)| \leq
\displaystyle
\begin{cases}
 4(l+1)(1+p^{-t}),  & l=1,  \\
 4(l+1)(1+p^{-2t}), & l \geq 2.
\end{cases}
\end{split}
\end{align*}

    Also, note that Lemma \ref{lem:Gauss} implies that
\begin{align*}
\begin{split}
	|G( \chi_{p^l}, q_1p^{2k} )| \ll p^l.
\end{split}
\end{align*}

  We apply the above estimations to see that when $\Re(s)> 1/2$, $\Re(w)>1$, $\Re(w+t)>1$,
\begin{align} \label{Dk1est}	
\begin{split}
\sum_{l \geq 0, k \geq 1} & \frac{ \psi(p^l)\psi'(p^{2k})G\lz \chi_{p^l}, q_1p^{2k} \pz r(p^l, t) }{p^{lw+2ks}}
= \sum_{k \geq 1}\frac{ \psi'(p^{2k})G\lz \chi_{1}, q_1p^{2k} \pz }{p^{2ks}}+\sum_{l, k \geq 1}\frac{ \psi(p^l)\psi'(p^{2k})G\lz \chi_{p^l}, q_1p^{2k} \pz  r(p^l, t)}{p^{lw+2ks}}  \\
& \ll  p^{-2s} + p^{-2s} \lz \frac{1}{p^{w-1}}4(l+1)(1+p^{-t})+\sum_{l \geq 2}\frac{1}{p^{l(w-1)}}4(l+1)(1+p^{-2t}) \pz  \\
& \ll p^{-2s}+p^{-2s-w+1}+p^{-2s-2w+2}+p^{-2s-w-t+1}+p^{-2s-2w-2t+2} \ll p^{-2s}+p^{-2s-w-t+1}.
\end{split}
\end{align}

  We now apply Lemma \ref{lem:Gauss} and \eqref{rexp} to see that when $p \nmid 2q_1$ and $\Re(w)>1$,
\begin{align}
\label{Dgenl0gen}	
\begin{split}
  \sum_{l=0}^\infty & \frac{ \psi(p^l)G\lz \chi_{p^l}, q_1 \pz r(p^l, t)}{p^{lw}} = 1+\frac{ \psi(p)\lambda_f(p)\chi^{(q_1)}(p)}{p^{w-1/2}}\lz 1-\frac{1}{p^{t}}\pz \\
=& L_p \lz w-\tfrac{1}{2}, f \otimes \chi^{(q_1)}\psi\pz  \lz 1-\frac {\lambda_f^2(p)}{p^{2w-1}}-\frac{\chi^{(q_1)}(p)\psi(p)\lambda_f(p)}{p^{w+t-1/2}}+\frac{\lambda_f^2(p)}{p^{2w+t-1}}\pz
\\
= & \frac {L_{p}\lz w-\tfrac{1}{2},f \otimes \chi^{(q_1)}\psi\pz}{L_{p}(2w-1, \sym f)\zeta_{p}(2w-1)} \\
 & \hspace*{3cm} \times \lz 1-\frac{\chi^{(q_1)}(p) \psi(p)\lambda_f(p)}{p^{w+t-1/2}}+O \Big (\frac{1}{p^{2w+t-1}}+\frac{1}{p^{w+t-1/2+2w-1}}+\frac{1}{p^{2(2w-1)}}\Big ) \pz \\
=& \frac {L_{p}\lz w-\tfrac{1}{2}, f \otimes \chi^{(q_1)}\psi\pz}{L_{p}(2w-1, \sym f)\zeta_{p}(2w-1)L_{p}\lz t+w-1/2,f \otimes \chi^{(q_1)}\psi\pz} \\
& \hspace*{3cm} \times \lz 1+O \Big (\frac{1}{p^{2w+2t-1}}+\frac{1}{p^{2w+t-1}}+\frac{1}{p^{w+t-1/2+2w-1}}+\frac{1}{p^{2(2w-1)}} \Big ) \pz .
\end{split}
\end{align}

  We deduce from \eqref{Dexp}, \eqref{Dgenest}, \eqref{Dk1est} and \eqref{Dgenl0gen} that for $p \nmid 2q_1$, $\Re(s)>\frac 12, \Re(w)>1, \Re(w+t)>1$,
\begin{align}
\label{Dgenexp}	
\begin{split}
 D_{1,p} & (s, w, t;q_1, \psi,\psi',f) \\
= & \frac {L_{p}\lz w-\tfrac{1}{2}, f \otimes \chi^{(q_1)}\psi\pz}{L_{p}(2w-1, \sym f)\zeta_{p}(2w-1)L_{p}\lz t+w-1/2,f \otimes \chi^{(q_1)}\psi\pz} \\
& \hspace*{1cm} \times \lz 1+O \Big (p^{-(2w+2t-1)}+p^{-(2w+t-1)}+p^{-(w+t-1/2+2w-1)}+p^{-2(2w-1)}+ p^{-2s}+p^{-2s-w-t+1}\Big ) \pz .
\end{split}
\end{align}
  The first assertion of the lemma now follows from \eqref{D1Eulerprod}, \eqref{Dexp} and \eqref{Dgenexp}. \newline

  We next note that Lemma \ref{lem:Gauss}, \eqref{rdef} and \eqref{lambdadest} implies that when $p | q_1, p \neq 2$,
\begin{align}
\label{Dgenl0}	
\begin{split}
  & \sum_{l=0}^\infty \frac{ \psi(p^l)G\lz \chi_{p^l}, q_1 \pz r(p^l, t)}{p^{lw}} = 1-\frac{ \psi(p^2)}{p^{2w-1}}\lz \lambda_f(p^2)-\frac{\lambda^2_f(p)}{p^{t}}+\frac{1}{p^{2t}}\pz =  1+O(p^{-2w-1}+p^{-2w-2t-1}).
\end{split}
\end{align}

  We thus deduce from \eqref{Dexp}, \eqref{Dk1est} and \eqref{Dgenl0}	 that for $p | q_1$, $p \neq 2$, $\Re(s)>1/2$, $\Re(w)>1$, $\Re(w+t)>1$,
\begin{align}
\label{Dgenexp1}	
\begin{split}
   & D_{1,p}(s, w,t; q_1, \psi,\psi',f)
=  1+O \Big (p^{-2w-1}+p^{-2w-2t-1}+ p^{-2s}+p^{-2s-w-t+1}\Big )  .
\end{split}
\end{align}

  We conclude from \eqref{D1Eulerprod}, \eqref{Dexp}, \eqref{Dgenexp} and \eqref{Dgenexp1} that for $\Re(s)>1+\varepsilon$,  $\Re(w)>1+\varepsilon$ and $\Re(w+t)>1+\varepsilon$,
\begin{align*}
\begin{split}
	D_{1}(s, w,t;q_1, \psi,\psi',f) \ll	& q_1^{\varepsilon}\Big |\frac {L^{(2q_1)}\lz w-\tfrac{1}{2}, f \otimes \chi^{(q_1)}\psi\pz}{L^{(2q_1)}(2w-1, \sym f)\zeta^{(2q_1)}(2w-1)L^{(2q_1)}\lz t+w-1/2,f \otimes \chi^{(q_1)}\psi\pz} \Big | \\
\ll & q_1^{\varepsilon}\Big |\frac {L\lz w-\tfrac{1}{2}, f \otimes \chi^{(q_1)}\psi\pz}{L(2w-1, \sym f)\zeta(2w-1)L\lz t+w-1/2,f \otimes \chi^{(q_1)}\psi\pz} \Big |
\ll |q_1w(w+t)|^{\varepsilon},
\end{split}
\end{align*}
  where the last estimation above follows \eqref{PgLest1}. This implies \eqref{Diest} and hence completes the proof of the lemma.
\end{proof}
	
  The above lemma now allows us to extend $C(s,w,z;f)$ to the region
\begin{equation*}
		\{(s,w,z):\ \Re(s)>1, \ \Re(w)>1,\ \Re(z)>1\}.
\end{equation*}
	Using \eqref{S3}, \eqref{A1A2} and the above, we can extend $(s-1)(w-1)A(s,w,z;f)$ to the region
\begin{align*}
		S_4=& \{(s,w,z):\Re(2z)>1, \ \Re(s+2z)>1, \ \Re(s+2w)>1,\ \Re(w+z)>1, \ \Re(4w)>1, \ \Re(s+w+z)>1, \\
& \hspace{1in} \ \Re(s+w)>1,\ \Re(s+z)>1, \ \Re(1-s)>1\}.
\end{align*}
   Note that the condition $\Re(1-s)>1$ is equivalent to $\Re(s)<0$ so that the conditions $\Re(s+w)>1, \Re(s+z)>1$ is the same as $\Re(w)>1$, $\Re(z)>1$.   It follows that the rest of the conditions given in the definition of $S_4$ are superseded by the above three conditions.  Thus
\begin{equation*}
		S_4=\{(s,w,z):\ \Re(s)<0, \ \Re(s+w)>1,\ \Re(s+z)>1\}.
\end{equation*}

   We further note that the region $S_2$ contains the subset given by
\begin{equation*}
	\{(s,w,z): \Re(z)>1, \ \Re(s)>1, \ \Re(s+2w)>2 \}.
\end{equation*}

  As the region $S_4$ contains points $(s,w,z)$ such that
\begin{equation*}
	\{ \Re(s)<0, \ 1< \Re(s+w)<\Re(z), \ \Re(s)>1-\Re(z) \},
\end{equation*}
  it is then readily seen that the convex hull of the above regions contains $S_5 \cap \{ (s,w,z): \Re(z)>1\}$, where
\begin{align*}
		S_5=\{(s,w,z):\ &\Re(s+2w)>2,\ \Re(s+2z)>2, \ \Re(s+z)>1, \ \Re(s+w)>1, \ \Re(w)> \tfrac{1}{4}, \ \Re(z)>\tfrac{1}{2} \}.
\end{align*}

 On the other hand, when $1/2 < \Re(z)< 1$, we note that the points in $S_2$ are certainly contained in the convex hull of $S_2$ and $S_4$ and one checks $S_2 \cap \{ (s,w,z): \Re(s)>1\}=S_5 \cap\{ (s,w,z): \Re(s)>1\}$. We may thus focus on the case $\Re(s)<1$.  In this case, we note that the points $(1,1/2, 1/2)$, $(1,1/2,1)$ are in the closure of $S_2$ and the point $(0,1,1)$ is in the closure of $S_4$. These three points determine a triangular region which can be regarded as the base of the region $\mathcal S$ enclosed by the four planes: $\Re(s+2w)=2$, $\Re(s)=1$, $\Re(z)=1$, $\Re(s+2z)=2$. It follows that the points in this triangular region are all in the convex hulls of $S_2$ and $S_4$. Further note that the points on the boundary $\mathcal S \cap \{\Re(z)=1\}$ of $\mathcal S$ are all in the convex hull of $S_2$ and $S_4$ since they can be identified with the set $\{(s, w, z): \Re(z)=1, 0 \leq \Re(s) \leq 1, \Re(s+2w) \geq  2\}$ and hence is contained in the convex hull of $S_5 \cap \{ (s,w,z): \Re(z)>1\}$.  We then deduce that the entire region $\mathcal S$ lies in the convex hull of $S_0$ and $S_1$.  We next note that when $1/2 < \Re(z)< 1$, the condition $\Re(s+2z)>2$ implies that $\Re(s)>0$ so that one also has $\Re(s+z)>1$ and that the condition $\Re(s+2w)>2$ implies $ \Re(s+w)>1$. It follows from these discussions we see that the intersection of the convex hull of $S_2$ and $S_4$ thus contains $S_5$. \newline

We apply Theorem \ref{Bochner} again to conclude that $(s-1)(w-1)A(s,w,z;f)$ converges absolutely in the region $S_5$.

\subsection{Bounding $A(s,w, z;f)$ in vertical strips}
\label{Section bound in vertical strips}
	
In order to prove Theorem \ref{Theorem for all characters}, we also need to estimate $|A(s,w,z;f)|$ in vertical strips. \newline

We set for any fixed $0<\delta <1/1000$ and the previously defined regions $S_j$,
\begin{equation*}
		\widetilde S_j=S_{j,\delta}\cap\{(s,w,z):\Re(s)>-5/2,\ \Re(w)>1/2-\delta\},
\end{equation*}
	where $S_{j,\delta}= \{ (s,w,z)+\delta (1,1,1) : (s,w,z) \in S_j \} $.  Set
\begin{equation*}
		p(s,w)=(s-1)(w-1),
\end{equation*}
so that $p(s,w)A(s,w,z;f)$ is an analytic function in the regions under our consideration.  We also write $\tilde p(s,w)=1+|p(s,w)|$. \newline

We consider the bound for $A(s,w, z;f)$ given in \eqref{Aboundinitial} and apply \eqref{Lfchibound1} to deduce that, under GRH, in $\widetilde S_0$,
\begin{align*}
\begin{split}
      |p(s,w)A(s,w,z;f)| \ll \tilde p(s,w)|wz|^{\varepsilon}(1+|w|)^{\max \{1-2\Re(w), 0 \}+\varepsilon}.
\end{split}
\end{align*}

   Similarly, using the estimates \eqref{Lchibound1} in \eqref{Sum A(s,w,z) over n} renders that, under GRH, in the region $\widetilde S_1$,
\begin{align*}
	|p(s,w)A(s,w,z;f)|\ll \tilde p(s,w)(1+|s|)^{\max \{1/2-\Re(s), 0\}+\varepsilon}.
\end{align*}
   We then deduce from the above and Proposition \ref{Extending inequalities} that in the convex hull $\widetilde S_2$ of $\widetilde S_0$ and $\widetilde S_1$, we have under GRH,
\begin{equation}
\label{AboundS2}
		|p(s,w)A(s,w,z;f)|\ll \tilde p(s,w) |wz|^{\varepsilon}(1+|w|)^{\max \{1-2\Re(w),0 \}+\varepsilon}(1+|s|)^{3+\varepsilon}.
\end{equation}

Moreover, by \eqref{residuesgensimplified} and the estimations given in \eqref{Lchibound1} for $\zeta(s)$ (corresponding to the case $\psi=\psi_0$ being the primitive principal character) and in \eqref{Lsymbound} for $L^{(2)}(2w, \sym f)$ that in the region $\widetilde S_3$, under GRH,
\begin{align}
\label{A1bound}
		|A_1(s,w,z;f)| \ll |w|^{\varepsilon}(1+|s|)^{\max \{(1-\Re(s))/2, 1/2-\Re(s), 0\}+\varepsilon}.
\end{align}

   Also, by \eqref{Cexp}--\eqref{Didef} and Lemma \ref{Estimate For D(w,t)},
\begin{equation}
\label{Csbound}
		|C(s,w,z;f)|\ll |wz|^{\varepsilon}
\end{equation}
   in the region
\begin{equation*}
		\{(s,w,z):\Re(w)>1+\varepsilon,\ \Re(z)>1+\varepsilon,\ \Re(s) >1+\varepsilon\}.
\end{equation*}

  Now, applying \eqref{A1A2}, the functional equation \eqref{Functional equation in s}, the bounds given in \eqref{A1bound}, \eqref{Csbound}, together with \eqref{Stirlingratio}, we obtain that in the region $\widetilde S_4$,
\begin{equation}
\label{AboundS3}
		|p(s,w)A(s,w,z;f)|\ll \tilde p(s,w) |wz|^{\varepsilon}(1+|w|)^{\max \{3(1/2-\Re (w)), 0\} +\varepsilon}(1+|s|)^{3+\varepsilon}.
\end{equation}

  We now conclude from \eqref{AboundS2}, \eqref{AboundS3} and Proposition \ref{Extending inequalities} that in the convex hull $\widetilde S_5$ of $\widetilde S_2$ and $\widetilde S_4$,
\begin{equation}
\label{AboundS4}
		|p(s,w)A(s,w,z;f)|\ll \tilde p(s,w)|wz|^{\varepsilon}(1+|w|)^{\max \{3(1/2-\Re (w)), 0\} +\varepsilon}(1+|s|)^{3+\varepsilon}.
\end{equation}
	
\subsection{Completion of proof}

The Mellin inversion yields that
\begin{equation}
\label{Integral for all characters}
		\sum_{\substack{(n,2)=1}}\frac{L^{(2)}(\tfrac{1}{2}+\alpha, f \otimes \chi_{n})}{L^{(2)}(\tfrac{1}{2}+\beta, f \otimes \chi_{n})}w \bfrac {n}X=\frac1{2\pi i}\int\limits_{(2)}A\lz s,\tfrac12+\alpha,\tfrac12+\beta; f \pz X^s\widehat w(s) \dif s,
\end{equation}
  where $A(s, w, z;f)$ is defined in \eqref{Aswzexp} and where we recall that $\widehat{w}$ is the Mellin transform of $w$ defined by
\begin{align*}
     \widehat{w}(s) =\int\limits^{\infty}_0w(t)t^s\frac {\dif t}{t}.
\end{align*}
Now repeated integration by parts gives that for any integer $E \geq 0$,
\begin{align}
\label{whatbound}
 \widehat w(s)  \ll  \frac{1}{(1+|s|)^{E}}.
\end{align}

    We evaluate the integral in \eqref{Integral for all characters} by shifting the line of integration  to $\Re(s)=N(\alpha,\beta)+\varepsilon$,  where $N(\alpha,\beta)$ is given in \eqref{Nab}.  Applying \eqref{AboundS4} and \eqref{whatbound} gives that the integral on the new line can be absorbed into the $O$-term in \eqref{Asymptotic for ratios of all characters}.  We encounter a simple pole at $s=1$ in the process whose residue is given in \eqref{Residue at s=1}. This yields the main terms in \eqref{Asymptotic for ratios of all characters} and completes the proof of Theorem \ref{Theorem for all characters}.

\vspace*{.5cm}

\noindent{\bf Acknowledgments.}   P. G. is supported in part by NSFC grant 11871082 and L. Z. by the FRG Grant PS43707 at the University of New South Wales.

\bibliography{biblio}
\bibliographystyle{amsxport}

\end{document}